\newtheorem{prop}{Proposition}[section]
\newtheorem*{theo**}{Théorème}
\newtheorem*{conj*}{Conjecture}
\newtheorem{lemm}[prop]{Lemme}
\newtheorem{lemm*}{Lemme}[prop]
\theoremstyle{definition}
\newtheorem{vide}[prop]{}
\newtheorem{defi}[prop]{Définition}
\newtheorem*{defi*}{Définition}
\theoremstyle{remark}
\newtheorem{rema}[prop]{Remarques}
\newtheorem{nota}[prop]{Notations}
\numberwithin{equation}{prop}
\newcommand{\riso}{ \overset{\sim}{\longrightarrow}\, }
\newcommand{\liso}{ \overset{\sim}{\longleftarrow}\, }
\renewcommand{\sp}{\mathrm{sp}}
\newcommand{\FF}{{\mathcal{F}}}
\newcommand{\E}{{\mathcal{E}}}
\newcommand{\D}{{\mathcal{D}}}
\newcommand{\PP}{{\mathcal{P}}}
\newcommand{\QQ}{{\mathcal{Q}}}
\renewcommand{\O}{{\mathcal{O}}}
\newcommand{\V}{\mathcal{V}}
\newcommand{\A}{\mathbb{A}}
\newcommand{\DD}{\mathbb{D}}
\renewcommand{\L}{\mathbb{L}}
\newcommand{\R}{\mathbb{R}}
\newcommand{\Q}{\mathbb{Q}}
\newcommand{\hdag}{  \phantom{}{^{\dag} }    }
\begin{document}
\selectlanguage{frenchb}

\title{Le formalisme des six opérations de Grothendieck en cohomologie $p$-adique}
\author{Daniel Caro} 

\date{}

\maketitle

\selectlanguage{english}
\begin{abstract}
Let $\mathcal{V}$ be a complete discrete valued ring of mixed characteristic $(0,p)$, 
$K$ its field of fractions,  
$k$ its residue field which is supposed to be perfect. 
Let $X$ be a separated $k$-scheme of finite type and
$Y$ be an open subscheme of $X$. 
We construct the category
$F\text{-}D ^\mathrm{b} _\mathrm{ovhol}  (\mathcal{D} ^\dag _{(Y,X)/K})$
 of overholonomy type over $(Y,X)/K$.
We check that these categories satisfy a formalism of Grothendieck's six operations.
\end{abstract}

\selectlanguage{frenchb}
\date
\tableofcontents

\bigskip 

\section*{Introduction}
Soit $\V$ un anneau de valuation discrète complet d'inégales caractéristiques $(0,p)$, 
de corps résiduel parfait $k$, de corps des fractions $K$. 
Soit $(Y, X)$ un couple, i.e. soit $Y \subset X$ une immersion ouverte de $k$-variétés (i.e. de $k$-schémas de type fini).
S'il existe un cadre  de la forme
$(Y, X, \PP)$ , i.e. s'il existe 
$\PP$ un $\V$-schéma formel (pour la topologie $p$-adique) séparé lisse
et $X \hookrightarrow \PP$
une immersion fermée, 
on note alors $F\text{-}D ^\mathrm{b} _\mathrm{surhol}  (Y, X, \PP/K)$  
la sous-catégorie pleine de la catégorie dérivée des $F\text{-}$complexes 
surholonomes de $\D ^\dag _{\PP,\Q}$-modules (toujours à gauche par défaut) à cohomologie bornée
dont les objets sont les $F\text{-}$complexes $\E$ 
tels qu'il existe un isomorphisme de la forme 
$\R \underline{\Gamma} ^\dag _{Y} (\E)\riso \E$.
On a déjà vérifié que l'on dispose d'un formalisme des six opérations de Grothendieck sur ce type de catégories
(voir notamment \cite{caro_surholonome} et \cite{caro-Tsuzuki}).
Nous expliquons dans ce papier comment étendre la construction de ce genre de catégories 
sans l'hypothèse de l'existence d'un cadre  englobant le couple $(Y, X)$.
La catégorie ainsi construite sera notée
$F\text{-}D ^\mathrm{b} _\mathrm{surhol}  (\D ^\dag _{(Y,X)/K})$.
Nous étendons ensuite naturellement le formalisme des six opérations  de Grothendieck sur les catégories 
de la forme $F\text{-}D ^\mathrm{b} _\mathrm{surhol}  (\D ^\dag _{(Y,X)/K})$.

Précisons à présent le contenu de ce papier. 
Dans le premier chapitre, nous donnons quelques rappels, définitions et propriétés sur les 
catégories définies sur les cadres. Dans le second chapitre, 
en nous inspirant du procédé utilisé en cohomologie rigide (voir \cite{LeStum-livreRigCoh}),
nous construisons la catégorie 
notée 
$F\text{-}D ^\mathrm{b} _\mathrm{surcoh}  (\D ^\dag _{(Y,X)/K})$
des complexes de type surcohérent sur le couple $(Y,X)$. 
Puis, dans le chapitre qui suit, on construit la catégorie des modules de type surcohérent et celle
des isocristaux surconvergents. 
Dans le quatrième chapitre, 
nous définissons naturellement l'image inverse extraordinaire, 
et le produit tensoriel. 
La construction de l'image directe est plus délicate: on parvient à la définir pour un morphisme réalisable de couples  (e.g. si
le morphisme $X' \to X$ est propre et le morphisme induit $Y' \to Y$ est quasi-projectif 
ou si le morphisme de couples se prolonge en un morphisme de cadres). 
Dans le cinquième chapitre, 
nous nous intéressons ensuite à l'indépendance par rapport à $X$ de la catégorie
$F\text{-}D ^\mathrm{b} _\mathrm{surcoh}  (\D ^\dag _{(Y,X)/K})$. 
Il manque cependant la stabilité par foncteur dual. 
Cela nous amène à introduire dans le sixième chapitre une catégorie de type dual surcohérent 
notée $F\text{-}D ^\mathrm{b} _\mathrm{surcoh}  (\D ^\dag _{(Y,X)/K})^{*}$ qui vérifie les propriétés duales 
de celles des complexes de type surcohérent: 
on dispose sur  les catégories de la forme $F\text{-}D ^\mathrm{b} _\mathrm{surcoh}  (\D ^\dag _{(Y,X)/K})^{*}$
des opérations images inverses, images directes extraordinaires par un morphisme réalisable
ainsi que du produit tensoriel tordu (on pourrait aussi dire "dualisé").
On bénéficie de plus de l'isomorphisme quasiment tautologique 
induit par le foncteur dual
$\DD _{(Y,X)/K}
\colon 
F\text{-}D ^\mathrm{b} _\mathrm{surcoh}  (\D ^\dag _{(Y,X)/K}) 
\riso
F\text{-}D ^\mathrm{b} _\mathrm{surcoh}  (\D ^\dag _{(Y,X)/K}) ^{*}$.
Pour obtenir la stabilité par les six opérations, 
on introduit dans le septième chapitre la catégorie 
notée 
$F\text{-}D ^\mathrm{b} _\mathrm{surhol}  (\D ^\dag _{(Y,X)/K})$
des complexes de type surholonome sur $(Y,X)$, qui est une sorte de recollement des 
catégories de complexes de type surcohérent et de type dual surcohérent.
Enfin, dans le dernier chapitre, nous adaptons toutes les constructions précédentes
pour les $k$-variétés au lieu des couples de $k$-variétés. 
\bigskip

{\bf Convention:} 
Les $\V$-schémas formels seront notés par des lettres calligraphiques, 
leur fibre spéciale par la lettre droite associée. 
Les $k$-schémas sont toujours réduits.

\section{Opérations cohomologiques sur les cadres}
\begin{defi}
On définit respectivement la catégorie des {\og $d$-cadres\fg}  
et celle des {\og $d$-cadres localement propre \fg}
de la manière suivante.
\begin{enumerate}
\item 
Un $d$-cadre est la donnée 
d'un $\V$-schéma formel séparé et lisse $\PP$, 
d'une immersion ouverte de $k$-schémas 
$Y \subset X$,
d'une immersion fermée 
$X \hookrightarrow \PP$,
d'un diviseur $T$  de $P$ tel que $Y = X \setminus T$.
On note  $(\PP, T ,X, Y)$ un tel $d$-cadre. 

Un morphisme de $d$-cadres
$(f, a,b) \colon (\PP', T', X', Y') \to (\PP, T,  X, Y)$ 
est un morphisme de $\V$-schémas formels 
$f \colon \PP ' \to \PP$ induisant les morphismes
$a\colon X' \to X$ et $b \colon Y'\to Y$.

\item Un $d$-cadre localement propre est la donnée 
d'une immersion ouverte de $\V$-schémas formels lisses
$\PP \subset \QQ$ avec $\QQ$ propre, d'une immersion ouverte de $k$-schémas 
$Y \subset X$,
d'une immersion fermée 
$X \hookrightarrow \PP$,
d'un diviseur $T$ de $P$ tel que $Y = X \setminus T$. 
On note  $(\QQ, \PP, T, X, Y)$ un tel $d$-cadre localement propre. 

Un morphisme de $d$-cadres localement propres 
$(f, g, a,b) \colon (\QQ', \PP', T',X', Y') \to (\QQ,\PP,  T, X, Y)$ 
est un morphisme de $\V$-schémas formels 
$f \colon \QQ ' \to \QQ$ induisant les morphismes
$g \colon \PP ' \to \PP$, $a\colon X' \to X$ et $b \colon Y'\to Y$.
\end{enumerate}
\end{defi}

\begin{defi}
\label{defi-cadre-comp}
On définit respectivement la catégorie des {\og cadres \fg}, des
{\og cadres localement propres  \fg} et
{\og cadres propres  \fg}
de la manière suivante. 
\begin{enumerate}
\item Un  cadre  est la donnée 
d'un $\V$-schéma formel séparé lisse $\PP$, 
d'une immersion ouverte de $k$-schémas 
$Y \subset X$ et d'une immersion fermée 
$X \hookrightarrow \PP$. 
On note  $(Y, X, \PP)$ un tel cadre. 

Un morphisme de cadres  
$u=(b,a,f) \colon (Y', X', \PP') \to(Y, X, \PP)$ 
est un morphisme de $\V$-schémas formels 
$f \colon \PP ' \to \PP$ induisant les morphismes
$a\colon X' \to X$ et $b \colon Y'\to Y$.

\item Un  cadre localement propre  est la donnée 
d'une immersion ouverte de $\V$-schémas formels lisses
$\PP \subset \QQ$ avec $\QQ$ propre, d'une immersion ouverte de $k$-schémas 
$Y \subset X$ et d'une immersion fermée 
$X \hookrightarrow \PP$. 
On note  $(Y, X, \PP, \QQ)$ un tel cadre localement propre . 

Un morphisme de cadres localement propres  
$u=(b,a,g,f) \colon (Y', X', \PP', \QQ') \to(Y, X, \PP, \QQ)$ 
est un morphisme de $\V$-schémas formels 
$f \colon \QQ ' \to \QQ$ induisant les morphismes
$g \colon \PP ' \to \PP$, $a\colon X' \to X$ et $b \colon Y'\to Y$.

\item Un cadre propre  est la donnée d'un $\V$-schéma formel propre et lisse $\PP$
et d'une immersion $Y \hookrightarrow \PP$. On note $(Y, \PP)$ un tel cadre propre.
Un morphisme de cadres propres    
$u=(b,g) \colon (Y', \PP') \to(Y, \PP)$ 
est un morphisme de $\V$-schémas formels 
$g \colon \PP ' \to \PP$ induisant le morphisme
$b \colon Y'\to Y$.
\end{enumerate}
\end{defi}

\begin{rema}
On dispose d'un foncteur canonique pleinement fidèle de la catégorie des $d$-cadres dans celle des cadres.
La  notion de cadre de \ref{defi-cadre-comp} est mieux adapté aux complexes en général (voir par exemple \ref{rema-cadvsd-cad}).
Pour les distinguer nous avons ajouté la précision {\og $d$-cadres \fg} au lieu de {\og cadres \fg}
et nous avons inverser l'ordre de leur écriture. 
\end{rema}

\begin{nota}
On note $\mathfrak{Cad}$ la catégorie des cadres localement propres
et
$\mathfrak{Cadp} $ la catégorie des cadres propres.
On dispose du foncteur canonique pleinement fidèle 
$\mathfrak{Cadp} \to \mathfrak{Cad}$
défini par 
$(Y, \PP) \mapsto (Y, X, \PP, \PP)$, où $X$ est l'adhérence de $Y$ dans $P$.  

\end{nota}

\begin{nota}
\label{nota-FDcadre}
\begin{itemize}
\item Soit $\PP$ un $\V$-schéma formel lisse. 
On désigne par $F\text{-}D ^\mathrm{b} _\mathrm{surhol} (\D ^\dag _{\PP,\Q})$ la sous-catégorie pleine de 
$F\text{-}D ^\mathrm{b}  (\D ^\dag _{\PP,\Q})$ des 
 $F\text{-}$complexes surholonomes (voir \cite{caro_surholonome}).

\item Soit $\PP$ un $\V$-schéma formel lisse et $Y$ un sous-schéma de $P$. 
On note 
$F\text{-}D ^\mathrm{b} _\mathrm{surhol}  (Y, \PP/K)$ la sous-catégorie pleine de 
$F\text{-}D ^\mathrm{b} _\mathrm{surhol} (\D ^\dag _{\PP,\Q})$ 
des $F\text{-}$complexes $\E$ 
tels qu'il existe un isomorphisme de la forme 
$\R \underline{\Gamma} ^\dag _{Y} (\E)\riso \E$.

\item Soit $(Y, X, \PP)$ un cadre (resp. $(Y, X, \PP, \QQ)$ un cadre localement propre).
On note alors  
$F\text{-}D ^\mathrm{b} _\mathrm{surhol}  (Y, X, \PP/K):= F\text{-}D ^\mathrm{b} _\mathrm{surhol}  (Y, \PP/K)$
(resp. $F\text{-}D ^\mathrm{b} _\mathrm{surhol}  (Y, X, \PP,\QQ/K):= F\text{-}D ^\mathrm{b} _\mathrm{surhol}  (Y, \PP/K)$).

\item Soit $(\QQ,  \PP, T,X, Y)$ (resp. $(\PP, T ,X, Y)$) 
un $d$-cadre localement propre
(resp. un $d$-cadre). On note 
$F\text{-}\mathrm{Surhol}  (\PP,Y/K)$ la catégorie des
$F\text{-}\D ^\dag _{\PP,\Q}$-module surholonome $\E$ 
tels qu'il existe un isomorphisme de la forme 
$\R \underline{\Gamma} ^\dag _{Y} (\E)\riso \E$.
On posera aussi 
$F\text{-}\mathrm{Surhol} 
(\QQ,\PP, T,X, Y)
:=
F\text{-}\mathrm{Surhol}  (\PP,Y/K)$
(resp.
$F\text{-}\mathrm{Surhol} 
(\PP, T, X, Y)
:=
F\text{-}\mathrm{Surhol}  (\PP,Y/K)$.

\item Comme les propriétés de $F$-surholonomie et de $F$-surcohérence sont égales, 
on peut remplacer {\og$\mathrm{surhol}$\fg}
par {\og$\mathrm{surcoh}$\fg} dans les notations ci-dessus sans changer la catégorie. 
\end{itemize}
 
\end{nota}

\begin{vide}
Soit $(Y, X, \PP)$ un cadre . On note $\DD _{\PP}$ le dual 
$\D ^\dag _{\PP,\Q}$-linéaire (e.g., voir \cite{virrion} ou \cite{Beintro2}).
On dispose du foncteur dual, noté $\DD _{Y, \PP}\colon F\text{-}D ^\mathrm{b} _\mathrm{surhol}  (Y, X, \PP/K)
\to F\text{-}D ^\mathrm{b} _\mathrm{surhol}  (Y, X, \PP/K)$ défini en posant, pour tout objet 
$\E \in  F\text{-}D ^\mathrm{b} _\mathrm{surhol}  (Y, X, \PP/K)$, 
$$\DD _{Y, \PP}( \E) :=\R \underline{\Gamma} ^\dag _{Y} \circ \DD _{\PP} (\E).$$
On pourra simplement noté $\DD _Y$ ce foncteur s'il n'y a pas d'ambiguïté sur $\PP$. 
\end{vide}

\begin{rema}
\label{rema-dualZ}
Soient
$(Y, X, \PP)$ un cadre , 
$\E \in  F\text{-}D ^\mathrm{b} _\mathrm{surhol}  (Y, X, \PP/K)$
et
$Z$ un fermé de $P$ tel que $Z \cap X = X \setminus Y$.
Comme 
$\DD _{\PP} (\E) \in F\text{-}D ^\mathrm{b} _\mathrm{surhol}  (X, X, \PP/K)$, 
comme les foncteurs $ (\hdag Z ) $ et 
$\R \underline{\Gamma} ^\dag _{Y}$ sont canoniquement égaux sur $F\text{-}D ^\mathrm{b} _\mathrm{surhol}  (X, X, \PP/K)$
(voir \cite[3.2.1]{caro-2006-surcoh-surcv}),
on obtient alors l'isomorphisme canonique
$\DD _{Y, \PP}( \E) \riso (\hdag Z) \circ \DD _{\PP} (\E)$.
\end{rema}

\begin{lemm}
\label{bidualite}
Soit $(Y, X, \PP)$ un cadre .
On dispose, pour tout $\E \in  F\text{-}D ^\mathrm{b} _\mathrm{surhol}  (Y, X, \PP/K)$, 
de l'isomorphisme de bidualité: 
\begin{equation}
\label{bidualite-iso}
\DD _{Y, \PP} \circ \DD _{Y, \PP} (\E) \riso \E.
\end{equation}
\end{lemm}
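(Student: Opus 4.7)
The plan is to combine the definition
$\DD _{Y, \PP}(\E) := \R \underline{\Gamma} ^\dag _{Y} \circ \DD _{\PP} (\E)$
with two ingredients that I will invoke: on the one hand, the biduality
$\DD _{\PP} \circ \DD _{\PP} (\E) \riso \E$
for surholonomic $\D ^\dag _{\PP,\Q}$-complexes
(Virrion's theorem, \cite{virrion}); on the other hand, the commutation
$\DD _{\PP} \circ \R \underline{\Gamma} ^\dag _{Y} \riso \R \underline{\Gamma} ^\dag _{Y} \circ \DD _{\PP}$
on such complexes, which expresses the local nature of both functors and has already been established in the Berthelot--Virrion theory.

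Granted these, the argument is purely formal: unfolding the definition twice, one gets
$$\DD _{Y, \PP} \circ \DD _{Y, \PP} (\E)
= \R \underline{\Gamma} ^\dag _{Y} \circ \DD _{\PP} \circ \R \underline{\Gamma} ^\dag _{Y} \circ \DD _{\PP} (\E).$$
I would then apply the commutation to the inner pair to obtain
$\R \underline{\Gamma} ^\dag _{Y} \circ \R \underline{\Gamma} ^\dag _{Y} \circ \DD _{\PP} \circ \DD _{\PP} (\E)$;
biduality for $\DD _{\PP}$ reduces this to
$\R \underline{\Gamma} ^\dag _{Y} \circ \R \underline{\Gamma} ^\dag _{Y} (\E)$;
and finally the idempotence of $\R \underline{\Gamma} ^\dag _{Y}$ together with the defining property
$\R \underline{\Gamma} ^\dag _{Y} (\E) \riso \E$ of objects of $F\text{-}D ^\mathrm{b} _\mathrm{surhol} (Y, X, \PP/K)$
yields the desired isomorphism \eqref{bidualite-iso}.

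A parallel route, closer to the identity pointed out in Remark \ref{rema-dualZ}, would be to fix a closed subset $Z$ of $P$ with $Z \cap X = X \setminus Y$, rewrite $\DD _{Y, \PP} (\E) \riso (\hdag Z) \DD _{\PP} (\E)$, and iterate; biduality then follows from the commutation of $\DD _{\PP}$ with $(\hdag Z)$, combined with the fact that $(\hdag Z)$ acts trivially on complexes supported on $Y$, since $Y$ and $Z$ are disjoint in $P$.

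The main obstacle is the commutation identity between $\DD _{\PP}$ and $\R \underline{\Gamma} ^\dag _{Y}$ (equivalently $(\hdag Z)$) on surholonomic complexes. This is a non-trivial but known structural result, resting on the stability of surholonomy under both operations and on local duality; once it is granted, the manipulation above amounts to stringing the pieces together.
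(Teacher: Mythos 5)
Your argument rests on the commutation $\DD _{\PP} \circ \R \underline{\Gamma} ^\dag _{Y} \riso \R \underline{\Gamma} ^\dag _{Y} \circ \DD _{\PP}$ (equivalently $\DD _{\PP} \circ (\hdag Z) \riso (\hdag Z) \circ \DD _{\PP}$), and that is precisely the step that fails: duality does not commute with localization. On surholonomic complexes the dual functor exchanges the two extensions across $Z$ --- schematically $\DD _{\PP} \circ (\hdag Z)$ is a ``$j _!$''-type functor while $(\hdag Z) \circ \DD _{\PP}$ is a ``$j _+$''-type functor --- and the two already differ for $\E = \O _{\PP,\Q}$ with $Z$ a divisor. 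This non-commutation is not a technicality one can cite away; it is the very reason the paper must later introduce the dual surcoherent categories $F\text{-}D ^\mathrm{b} _\mathrm{surcoh} (\D ^\dag _{(Y,X)/K}) ^{*}$, glued along $u ^{+}$ rather than $u ^{!}$. Both routes you sketch are therefore blocked at the same point, and your closing remark that the obstacle is ``a known structural result'' is where the proof breaks.

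The paper's proof avoids any such commutation. Writing $\DD _{Y,\PP} = (\hdag Z) \circ \DD _{\PP}$ with $Z = X \setminus Y$ (Remark \ref{rema-dualZ}), one gets $\DD _{Y,\PP} \circ \DD _{Y,\PP} (\E) = (\hdag Z) \circ \DD _{\PP} \circ (\hdag Z) \circ \DD _{\PP} (\E)$. Applying $(\hdag Z) \circ \DD _{\PP}$ to the localization triangle $\R \underline{\Gamma} ^\dag _{Z} \DD _{\PP}(\E) \to \DD _{\PP}(\E) \to (\hdag Z) \DD _{\PP}(\E) \to +1$, the contribution of the first term is $(\hdag Z) \circ \DD _{\PP} \circ \R \underline{\Gamma} ^\dag _{Z} \circ \DD _{\PP}(\E) = 0$, because $\DD _{\PP}$ preserves support in the closed subset $Z$ and $(\hdag Z)$ kills complexes supported in $Z$. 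Hence the inner $(\hdag Z)$ may simply be deleted: $(\hdag Z) \DD _{\PP} (\hdag Z) \DD _{\PP} (\E) \riso (\hdag Z) \DD _{\PP} \DD _{\PP} (\E) \riso (\hdag Z)(\E) \riso \E$, the last two isomorphisms being Virrion's biduality and the hypothesis $\R \underline{\Gamma} ^\dag _{Y}(\E) \riso \E$. The only nontrivial inputs are biduality for $\DD _{\PP}$ and the preservation of support by $\DD _{\PP}$ --- not its commutation with $(\hdag Z)$.
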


\begin{proof}
Notons $Z:= X \setminus Y$. Comme 
$\DD _{\PP} \circ \R \underline{\Gamma} ^\dag _{Z} \circ \DD _{\PP}(\E) $ est à support dans $Z$
alors 
$(\hdag Z) \circ \DD _{\PP} \circ \R \underline{\Gamma} ^\dag _{Z} \circ \DD _{\PP}(\E) =0$.
On en déduit le premier isomorphisme:
$$\DD _{Y, \PP} \circ \DD _{Y, \PP} (\E)
=
(\hdag Z) \circ \DD _{\PP} \circ (\hdag Z)  \circ \DD _{\PP}(\E) 
\riso 
(\hdag Z) \circ \DD _{\PP}  \circ \DD _{\PP}(\E) 
\riso 
(\hdag Z) (\E) 
\riso 
\E.$$
\end{proof}

\begin{vide}
Soit $(Y, X, \PP)$ un cadre .
D'après \cite{caro-stab-prod-tens},
le bifoncteur 
$-
\smash{\overset{\L}{\otimes}}   ^{\dag}
_{\O  _{\PP,\Q}}
-
[d _{Y/P}]
\colon 
F\text{-}D ^\mathrm{b} _\mathrm{surhol} (\D ^\dag _{\PP,\Q})
\times
F\text{-}D ^\mathrm{b} _\mathrm{surhol} (\D ^\dag _{\PP,\Q})
\to 
F\text{-}D ^\mathrm{b} _\mathrm{surhol} (\D ^\dag _{\PP,\Q})
$
se factorise en 
le bifoncteur produit tensoriel que l'on notera 
$$-
\smash{\overset{\L}{\otimes}}   ^{\dag}
_{\O  _{(Y, \PP)}}
-
\colon 
F\text{-}D ^\mathrm{b} _\mathrm{surhol}  (Y, X, \PP/K)
\times
F\text{-}D ^\mathrm{b} _\mathrm{surhol}  (Y, X, \PP/K)
\to 
F\text{-}D ^\mathrm{b} _\mathrm{surhol}  (Y, X, \PP/K).$$
\end{vide}

\begin{vide}
Soit $u=(b,a,f) \colon (Y', X', \PP') \to(Y, X, \PP)$ 
un morphisme de cadres .
On définit le foncteur  
$u ^!\colon 
F\text{-}D ^\mathrm{b} _\mathrm{surhol}  (Y, X, \PP/K)
\to 
F\text{-}D ^\mathrm{b} _\mathrm{surhol}  (Y', X', \PP'/K)
$
image inverse extraordinaire par $u$ en posant
$u ^!  := \R \underline{\Gamma} ^\dag _{Y'}  \circ f ^{!}$.
On en déduit comme d'habitude le foncteur  
$u ^+\colon 
F\text{-}D ^\mathrm{b} _\mathrm{surhol}  (Y, X, \PP/K)
\to 
F\text{-}D ^\mathrm{b} _\mathrm{surhol}  (Y', X', \PP'/K)
$
image inverse par $u$ en posant
$u ^+  := \DD _{Y', \PP'} \circ u ^{!}\circ \DD _{Y, \PP}$.
On remarque que ces foncteurs ne dépendent pas des fermés $X$ et $X'$.

\end{vide}

\begin{vide}
Soit $u=(b,a,f) \colon (Y', X', \PP') \to(Y, X, \PP)$ 
un morphisme de cadres  tel que $a$ soit propre.
On définit le foncteur  
$u _+\colon 
F\text{-}D ^\mathrm{b} _\mathrm{surhol}  (Y', X', \PP'/K)
\to 
F\text{-}D ^\mathrm{b} _\mathrm{surhol}  (Y, X, \PP/K)$
image directe par $u$ en posant
$u _+ := f _+$.
On définit le foncteur  
$u _!\colon 
F\text{-}D ^\mathrm{b} _\mathrm{surhol}  (Y', X', \PP'/K)
\to 
F\text{-}D ^\mathrm{b} _\mathrm{surhol}  (Y, X, \PP/K)$
image directe extraordinaire par $u$ en posant
$u _! := \DD _{Y, \PP} \circ f _+\circ \DD _{Y', \PP'}$.
On remarque que ces foncteurs ne dépendent pas des fermés $X$ et $X'$. 
\end{vide}

\begin{vide}
Soient $u=(b,a,g,f) \colon (Y', X', \PP', \QQ') \to(Y, X, \PP, \QQ)$ 
un morphisme de cadres localement propres 
et $v= (b, a, g)$ le morphisme de cadres  induits.
Avec les notations de \ref{nota-FDcadre},
On définit l'image inverse extraordinaire par $u$ notée 
$u ^{!}\colon F\text{-}D ^\mathrm{b} _\mathrm{surhol}  (Y, X, \PP,\QQ/K)
\to
F\text{-}D ^\mathrm{b} _\mathrm{surhol}  (Y', X', \PP',\QQ'/K)$
en posant $u ^{!} = v ^{!}$. 
De même, on définit le foncteur image inverse par $u$ en posant
$u ^{+} = v ^{+}$.
Lorsque $a$ est propre, on dispose des image directe et image directe extraordinaire par $u$ 
définis en posant
$u _{+} = v _{+}$,
$u _{!} = v _{!}$.
On remarque que ces foncteurs ne dépendent pas des fermés $X$ et $X'$ ni de $\QQ$ et $\QQ'$. 

\end{vide}

\begin{lemm}
\label{cgtdebase}
Soient 
$u=(b,a,g,f) \colon (Y', X', \PP', \QQ') \to(Y, X, \PP, \QQ)$
et 
$\theta =(d,c,i,h) \colon (\widetilde{Y}, \widetilde{X}, \widetilde{\PP}, \widetilde{\QQ}) \to(Y, X, \PP, \QQ)$
deux morphismes de cadres localement propres 
avec $h$ lisse et  $c$ propre. 
Soient $(\widetilde{Y}', \widetilde{X}', \widetilde{\PP}', \widetilde{\QQ}'):=
(\widetilde{Y} \times _Y Y ', \widetilde{X}\times _X X ', \widetilde{\PP} \times _{\PP} \PP ' , \widetilde{\QQ}\times _{\QQ} \QQ ')$
et
$\theta '= (d',c',i',h') \colon (\widetilde{Y}', \widetilde{X}', \widetilde{\PP}', \widetilde{\QQ}') \to(Y', X', \PP', \QQ')$, 
$\widetilde{u}= (\widetilde{b}, \widetilde{a},\widetilde{g},\widetilde{f})
\colon (\widetilde{Y}', \widetilde{X}', \widetilde{\PP}', \widetilde{\QQ}') \to (\widetilde{Y}, \widetilde{X}, \widetilde{\PP}, \widetilde{\QQ}) $
les projections canoniques. 
On dispose pour tout 
$\widetilde{\E} \in F\text{-}D ^\mathrm{b} _\mathrm{surhol}  (\widetilde{Y}, \widetilde{\PP}/K)$
de l'isomorphisme canonique
\begin{equation}
\label{iso-chgt-base-cadre}
u ^! \circ \theta _+ (\widetilde{\E})
\riso
\theta ' _+ \circ \widetilde{u} ^{!}
(\widetilde{\E}).
\end{equation}

\end{lemm}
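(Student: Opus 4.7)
The strategy is to reduce the isomorphism \eqref{iso-chgt-base-cadre} to the classical base-change isomorphism for the extraordinary inverse and direct images on the underlying smooth formal $\V$-sch\'emas, then to commute the local cohomology functor $\R\underline{\Gamma}^\dag$ past the proper direct image.

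First, I would unfold the definitions. Since $\theta$ and $\theta'$ are morphisms of cadres localement propres, they reduce to the morphisms of cadres $(d,c,i)$ and $(d',c',i')$ respectively, so that $\theta_+ = i_+$ and $\theta'_+ = i'_+$ with $i \colon \widetilde{\PP} \to \PP$ and $i' \colon \widetilde{\PP}' \to \PP'$. Similarly $u^! = \R\underline{\Gamma}^\dag_{Y'} \circ g^!$ and $\widetilde{u}^! = \R\underline{\Gamma}^\dag_{\widetilde{Y}'} \circ \widetilde{g}^!$. By definition of the fiber products, the square of formal $\V$-sch\'emas formed by $i, g, i', \widetilde{g}$ is cartesian, and the isomorphism to establish reads
$$\R\underline{\Gamma}^\dag_{Y'} \circ g^! \circ i_+ (\widetilde{\E}) \riso i'_+ \circ \R\underline{\Gamma}^\dag_{\widetilde{Y}'} \circ \widetilde{g}^! (\widetilde{\E}).$$

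Second, I would invoke the base-change isomorphism $g^! \circ i_+ \riso i'_+ \circ \widetilde{g}^!$ at the level of $F$-complexes surholonomes on the underlying formal $\V$-sch\'emas. This is the standard arithmetic $\D^\dag$-module base-change: the hypothesis that $c$ is proper furnishes the propriety of $i$ on the relevant closed subscheme $\widetilde{X}$ needed for the direct image, while $h$ smooth ensures that $\widetilde{\PP}' = \widetilde{\PP} \times_\PP \PP'$ is a smooth formal $\V$-sch\'ema, so that $\widetilde{g}^!$ preserves surholonomie. Third, since $\widetilde{Y}' = \widetilde{Y} \times_Y Y'$ is precisely the preimage of $Y'$ under $i'$ in the relevant sense, one applies the commutation $\R\underline{\Gamma}^\dag_{Y'} \circ i'_+ \riso i'_+ \circ \R\underline{\Gamma}^\dag_{\widetilde{Y}'}$, a standard property of local cohomology with respect to proper direct image. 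Composing the two identifications gives the required isomorphism, and its naturality follows at once from the naturality of its constituents.

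The main obstacle is the second step, namely ensuring that the base-change isomorphism $g^! \circ i_+ \riso i'_+ \circ \widetilde{g}^!$ is available in the category $F\text{-}D^\mathrm{b}_\mathrm{surhol}$ under only the hypotheses that $h$ is smooth and $c$ proper, with no further assumption on $g$ or $f$. Once this is secured, the remaining commutation of $\R\underline{\Gamma}^\dag$ with $i'_+$ is routine and the whole argument becomes formal.
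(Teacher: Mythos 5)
Your strategy is the same as the paper's: unfold $\theta _+ = i _+$ and $u ^! = \R \underline{\Gamma} ^\dag _{Y'} \circ g ^!$, apply the Frobenius-compatible base change isomorphism $g ^! \circ i _+ \riso i ' _+ \circ \widetilde{g} ^{!}$ for the cartesian square $\widetilde{\PP}' = \widetilde{\PP} \times _{\PP} \PP'$ (the paper settles your ``main obstacle'' by citing Abe), then commute the local cohomology functor past $i ' _+$. The gap is in your third step. You assert that $\widetilde{Y}' = \widetilde{Y} \times _Y Y'$ is ``precisely the preimage of $Y'$ under $i'$'', which would give $\R \underline{\Gamma} ^\dag _{Y'} \circ i ' _+ \riso i ' _+ \circ \R \underline{\Gamma} ^\dag _{\widetilde{Y}'}$ directly. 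That identification is false in general: the preimage of $Y'$ under $i'$ is $\widetilde{\PP} \times _{\PP} Y'$, which contains $\widetilde{Y} \times _Y Y'$ but is usually strictly larger, since nothing forces the fibre of $\widetilde{\PP} \to \PP$ over a point of $Y'$ to lie in $\widetilde{Y}$. The commutation you may legitimately invoke only yields
\begin{equation*}
\R \underline{\Gamma} ^\dag _{Y'} \circ i ' _+ \circ \widetilde{g} ^{!} (\widetilde{\E})
\riso
i ' _+ \circ \R \underline{\Gamma} ^\dag _{\widetilde{\PP} \times _{\PP} Y'} \circ \widetilde{g} ^{!} (\widetilde{\E}),
\end{equation*}
and it remains to replace $\widetilde{\PP} \times _{\PP} Y'$ by $\widetilde{Y}'$.

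This is exactly where the hypothesis $\widetilde{\E} \in F\text{-}D ^\mathrm{b} _\mathrm{surhol} (\widetilde{Y}, \widetilde{\PP}/K)$, i.e. $\R \underline{\Gamma} ^\dag _{\widetilde{Y}} (\widetilde{\E}) \riso \widetilde{\E}$, must enter, and your argument never uses it. The paper's fix: the support condition gives $\R \underline{\Gamma} ^\dag _{\widetilde{Y} \times _{\PP} \PP'} \circ \widetilde{g} ^{!} (\widetilde{\E}) \riso \widetilde{g} ^{!} (\widetilde{\E})$, and since $(\widetilde{\PP} \times _{\PP} Y') \cap (\widetilde{Y} \times _{\PP} \PP') = \widetilde{Y}'$, one deduces $\R \underline{\Gamma} ^\dag _{\widetilde{\PP} \times _{\PP} Y'} \circ \widetilde{g} ^{!} (\widetilde{\E}) \riso \R \underline{\Gamma} ^\dag _{\widetilde{Y}'} \circ \widetilde{g} ^{!} (\widetilde{\E})$. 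With that step inserted your proof closes and coincides with the paper's; without it, the last isomorphism you write down is not available.
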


\begin{proof}
Par définition, 
$\theta ' _+ \circ \widetilde{u} ^{!}
(\widetilde{\E})=
i ' _+ \circ \R \underline{\Gamma} ^\dag _{\widetilde{Y}  '}  \circ   \widetilde{g} ^!  (\widetilde{\E})$.
Grâce à l'isomorphisme de changement de base commutant à Frobenius (voir \cite{Abe-Frob-Poincare-dual}),
$u ^! \circ \theta _+ (\widetilde{\E}) =
\R \underline{\Gamma} ^\dag _{Y '} \circ g ^! \circ  i _+ (\widetilde{\E})
\riso 
\R \underline{\Gamma} ^\dag _{Y '}  \circ  i ' _+ \circ \widetilde{g} ^!  (\widetilde{\E})
\riso
i ' _+ \circ \R \underline{\Gamma} ^\dag _{\widetilde{\PP} \times _{\PP} Y '}  \circ   \widetilde{g} ^!  (\widetilde{\E})$.
Comme $\R \underline{\Gamma} ^\dag _{\widetilde{Y}}  (\widetilde{\E})
\riso
\widetilde{\E}$, on en déduit 
$\R \underline{\Gamma} ^\dag _{\widetilde{Y} \times _{\PP} \PP '}  \circ   \widetilde{g} ^!  (\widetilde{\E})
\riso 
  \widetilde{g} ^!  (\widetilde{\E})$.
Comme  
$(\widetilde{\PP} \times _{\PP} Y ' )\cap( \widetilde{Y} \times _{\PP} \PP ')=
\widetilde{Y}  '$, 
il en résulte 
$  \R \underline{\Gamma} ^\dag _{\widetilde{\PP} \times _{\PP} Y '}  \circ   \widetilde{g} ^!  (\widetilde{\E})
\riso 
\R \underline{\Gamma} ^\dag _{\widetilde{Y}  '}  \circ   \widetilde{g} ^!  (\widetilde{\E})$.
D'où le résultat.
\end{proof}

\section{Catégories de complexes de type surcohérent}

\begin{defi}
\label{defi-couples}
$\bullet$ La catégorie, notée $\mathfrak{Cpl}$, des couples (de $k$-variétés) a pour objets
les couples $(Y,X)/K$ avec $X$ une $k$-variété et $Y$ est un ouvert de $X$
et pour morphismes les morphismes de $k$-variétés 
$a \colon X' \to X$ tels que $a (Y') \subset Y$. On notera $(b,a)\colon (Y', X' ) \to (Y,X)$ un tel morphisme, où
$b\colon Y'\to Y$ désigne le morphisme un induit par $a$.
On dira qu'un morphisme $(b,a)$ de $\mathfrak{Cpl}$ est {\og complet\fg} si $a$ est propre. 
\end{defi}

\begin{defi}
\label{uni}
Soit $(Y,X)/K$ un couple.
On note $\mathfrak{Uni} (Y,X/K)$ 
la catégorie des cadres localement propres  au-dessus de $(Y,X)$.
Un objet de $\mathfrak{Uni} (Y,X/K)$ est ainsi la donnée d'un cadre localement propre  $(Y', X', \PP', \QQ')$  et d'un morphisme de 
$\mathfrak{Cpl}$
de la forme $(b,a) \colon (Y',X') \to (Y,X)$. 
On notera $(b,a) \colon (Y', X', \PP', \QQ') \to (Y,X)$ un tel objet ou plus simplement par abus de notations $(Y', X', \PP', \QQ')$.
Les morphismes 
$((Y'', X'', \PP'', \QQ''), (b',a') )
\to
((Y', X', \PP', \QQ'), (b,a) )$
de $\mathfrak{Uni} (Y,X/K)$ sont les morphismes de $\mathfrak{Cad}$ de la forme 
$u=(d,c, g,f)\colon (Y'', X'', \PP'', \QQ'')
\to
(Y', X', \PP', \QQ')$
tels que 
$(b,a) \circ (d,c) =(b',a') $.
On pourra noter abusivement 
$u\colon (Y'', X'', \PP'', \QQ'')
\to
(Y', X', \PP', \QQ')$ un tel morphisme. 
\end{defi}

La définition qui suit s'inspire fortement de la procédure analogue donnée en cohomologie rigide (voir \cite[7.3.7]{LeStum-livreRigCoh}):
\begin{defi}
\label{defi-surcoh-cadre}
Soit $(Y,X)/K$ un couple.
On définit la catégorie $F\text{-}D ^\mathrm{b} _\mathrm{surcoh} (\D ^\dag _{(Y,X)/K})$ des complexes de type surcohérent sur $(Y,X)/K$
de la manière suivante:
\begin{itemize}
\item Un objet est la donnée 
\begin{itemize}
\item  d'une famille d'objets $\E _{(Y', X', \PP', \QQ')} $ de 
$F\text{-}D ^\mathrm{b} _\mathrm{surcoh}  (Y', X', \PP',\QQ'/K)$, 
où $(Y', X', \PP', \QQ')$ parcourt les objets de $\mathfrak{Uni} (Y,X/K)$ ;
\item pour toute flèche
$u\colon  (Y'', X'', \PP'', \QQ'') \to(Y', X', \PP', \QQ')$ de $\mathfrak{Uni} (Y,X/K)$,
d'un isomorphisme 
$$\phi _u \colon u ^{!} (\E _{(Y', X', \PP', \QQ')} ) \riso \E _{(Y'', X'', \PP'', \QQ'')} $$ 
dans $F\text{-}D ^\mathrm{b} _\mathrm{surcoh}  (Y'', X'', \PP'',\QQ''/K)$,
ces isomorphismes vérifiant la condition de cocycle : pour tous morphismes 
$u\colon  (Y'', X'', \PP'', \QQ'') \to(Y', X', \PP', \QQ')$ et
$v\colon  (Y''', X''', \PP''', \QQ''') \to(Y'', X'', \PP'', \QQ'')$
de $\mathfrak{Uni} (Y,X/K)$,
le diagramme
\begin{equation}
\xymatrix @R=0,3cm{
 {v ^{!} \circ u ^{!} (\E _{(Y', X', \PP', \QQ')} ) } 
 \ar[r] ^-{v ^{!} (\phi _u)}
 \ar[d] ^-{\sim}
 & 
 {v ^{!} ( \E _{(Y'', X'', \PP'', \QQ'')} )} 
  \ar[d] ^-{\phi _v}
 \\ 
 {(u\circ v) ^{!} (\E _{(Y', X', \PP', \QQ')} ) } 
  \ar[r] ^-{\phi _{u\circ v}}
 & 
 {\E _{(Y''', X''', \PP''', \QQ''')} } 
 }
\end{equation}

soit commutatif.
\end{itemize}
On notera de manière elliptique 
$(\E _{(Y', X', \PP', \QQ')} , \phi _u )$ une telle donnée.

\item Un morphisme $\alpha\colon (\E _{(Y', X', \PP', \QQ')} , \phi _u ) 
\to 
(\FF _{(Y', X', \PP', \QQ')} , \psi _u )$
de
$F\text{-}D ^\mathrm{b} _\mathrm{surcoh} (\D ^\dag _{(Y,X)/K})$ 
est la donnée d'une famille 
de morphismes
$\alpha _{(Y', X', \PP', \QQ')}\colon \E _{(Y', X', \PP', \QQ')} \to \FF _{(Y', X', \PP', \QQ')} $
 de 
$F\text{-}D ^\mathrm{b} _\mathrm{surcoh}  (Y', X', \PP',\QQ'/K)$
telle que pour tout morphisme
$u\colon  (Y'', X'', \PP'', \QQ'') \to(Y', X', \PP', \QQ')$ 
de 
$\mathfrak{Uni} (Y,X/K)$ on ait 
$\psi _u \circ u ^{!}(\alpha _{(Y', X', \PP', \QQ')})
=
\alpha _{(Y'', X'', \PP'', \QQ'')} \circ \phi _u $.
\end{itemize}

\end{defi}

\begin{lemm}
Soit $(Y, X, \PP)$ un cadre. 
Soient
$pr _1 \colon (Y, X, \PP\times \PP) \to (Y, X, \PP)$
et
$pr _2 \colon (Y, X, \PP\times \PP) \to (Y, X, \PP)$
les morphismes induits respectivement par projection à gauche et à droite. 
Pour tout 
$\E\in F\text{-}D ^\mathrm{b} _\mathrm{surcoh}  (Y, X, \PP/K)$, 
on dispose de l'isomorphisme canonique dans 
$F\text{-}D ^\mathrm{b} _\mathrm{surcoh}  (Y, X, \PP\times \PP/K)$
de la forme :
\begin{equation}
\label{pr2=pr1}
pr _2 ^{!} (\E) \riso pr _1 ^{!} (\E).
\end{equation}

\end{lemm}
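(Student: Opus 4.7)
The strategy is to show that both $pr_1^!(\E)$ and $pr_2^!(\E)$ are canonically isomorphic to a common object, namely $\Delta_+(\E)$ for the diagonal closed immersion $\Delta\colon \PP \hookrightarrow \PP\times\PP$. Write $f_i\colon \PP\times\PP\to\PP$ for the underlying formal scheme projections, so that by definition $pr_i^! = \R\underline{\Gamma}^\dag_{Y_\Delta} \circ f_i^!$, where $Y_\Delta := \Delta(Y)\subset\PP\times\PP$. The closed immersion $X\hookrightarrow \PP\times\PP$ in the frame $(Y,X,\PP\times\PP)$ must be compatible with both projections to $(Y,X,\PP)$, hence necessarily factors as $X\hookrightarrow \PP \overset{\Delta}{\hookrightarrow}\PP\times\PP$. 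In particular, $\Delta$ lifts to a morphism of cadres (also denoted $\Delta$) from $(Y,X,\PP)$ to $(Y,X,\PP\times\PP)$, satisfying $f_i \circ \Delta = \mathrm{id}_\PP$.

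Since $Y_\Delta = \Delta(P) \cap f_i^{-1}(Y)$ as subsets of $P\times P$, one has $\R\underline{\Gamma}^\dag_{Y_\Delta} \riso \R\underline{\Gamma}^\dag_{\Delta(P)} \circ \R\underline{\Gamma}^\dag_{f_i^{-1}(Y)}$. Combined with the base change isomorphism $\R\underline{\Gamma}^\dag_{f_i^{-1}(Y)} \circ f_i^! \riso f_i^! \circ \R\underline{\Gamma}^\dag_{Y}$ on $\D ^\dag _{\PP ,\Q}$-modules and with the support hypothesis $\R\underline{\Gamma}^\dag_{Y}(\E)\riso \E$, this yields a canonical identification
$$pr_i^!(\E) \riso \R\underline{\Gamma}^\dag_{\Delta(P)}\bigl(f_i^!(\E)\bigr).$$

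Kashiwara's equivalence for the closed immersion $\Delta$ of smooth $\V$-sch\'emas formels then provides $\R\underline{\Gamma}^\dag_{\Delta(P)}(\FF)\riso \Delta_+ \Delta^!(\FF)$ for any complex $\FF$ on $\PP\times\PP$. Applying this with $\FF = f_i^!(\E)$ and using $f_i \circ \Delta = \mathrm{id}_\PP$ to identify $\Delta^! f_i^!(\E)\riso \E$, we conclude $pr_i^!(\E) \riso \Delta_+(\E)$ in $F\text{-}D^\mathrm{b}_\mathrm{surcoh}(Y,X,\PP\times\PP/K)$. The desired isomorphism \eqref{pr2=pr1} is then defined as the composition $pr_2^!(\E) \riso \Delta_+(\E) \liso pr_1^!(\E)$.

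The point most requiring care, and the subtlest, is the canonicity and Frobenius-equivariance of each of the identifications above. The three ingredients invoked -- the intersection identity for overconvergent local cohomology, the base change between $\R\underline{\Gamma}^\dag_{-}$ and $f^!$, and Kashiwara's equivalence $\R\underline{\Gamma}^\dag_{\Delta(P)}\riso \Delta_+\Delta^!$ for a closed immersion of smooth $\V$-sch\'emas formels -- are all standard in the $F$-surcoh\'erent setting recalled earlier in the paper, so the proof reduces to a careful diagram chase from these building blocks.
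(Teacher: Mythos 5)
Your proof is correct and rests on the same two ingredients as the paper's: the Berthelot--Kashiwara theorem for the diagonal immersion $\delta \colon (Y,X,\PP)\to (Y,X,\PP\times\PP)$ and the transitivity identity coming from $pr_i\circ\delta=\mathrm{id}$. The only difference is one of packaging: the paper invokes full faithfulness of $\delta^{!}$ to descend the isomorphism from $\delta^{!}\circ pr_2^{!}(\E)\riso\delta^{!}\circ pr_1^{!}(\E)$, whereas you use the quasi-inverse half of the same equivalence, $\R\underline{\Gamma}^\dag_{\Delta(P)}\riso\Delta_+\Delta^{!}$ on surcoherent complexes, to identify both sides directly with $\Delta_+(\E)$.
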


\begin{proof}
Notons $\delta = (id, id, d) \colon (Y, X, \PP) \to  (Y, X, \PP\times \PP)$
le morphisme induit par l'immersion fermée diagonale. 
Comme d'après le théorème de Berthelot-Kashiwara 
le foncteur $\delta ^{!}=d ^{!}$ est pleinement fidèle sur 
$ F\text{-}D ^\mathrm{b} _\mathrm{surcoh}  (Y, X, \PP\times \PP/K)$, 
il s'agit de valider 
un isomorphisme canonique de la forme
$$\delta ^{!} \circ pr _2 ^{!} (\E) \riso \delta ^{!} \circ pr _1 ^{!} (\E),$$
ce qui découle de la transitivité des images inverses extraordinaires.
\end{proof}

\begin{lemm}
\label{4.2.3.4}
Soit 
$u=(b,a,g,f) \colon (Y, X', \PP', \QQ') \to(Y, X, \PP, \QQ)$ 
un morphisme de cadres localement propres 
tel que $a$ soit propre et $f$ soit lisse.
\begin{enumerate}
\item On dispose d'isomorphismes canoniques de la forme 
$u _{+} \circ u ^{!} \riso id$ et 
$id \riso u ^{!} \circ u _+$. En particulier, 
les foncteurs $u _+$ et $u ^!$ induisent des équivalences quasi-inverses entre 
$F\text{-}D ^\mathrm{b} _\mathrm{surcoh}  (Y, X', \PP',\QQ'/K)$
et
$F\text{-}D ^\mathrm{b} _\mathrm{surcoh}  (Y, X, \PP,\QQ/K)$.

\item De plus, on dispose des isomorphismes canoniques 
$u _{+} \riso u _!$ et 
$u ^{!} \riso u ^{+}$.
\end{enumerate}

\end{lemm}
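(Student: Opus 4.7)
The strategy is to exploit the fact that, since the open subscheme $Y$ is literally the same on source and target, the morphism $b : Y \to Y$ induced by $a$ is the identity. Consequently the locally closed immersions $\iota _\PP : Y \hookrightarrow \PP$ and $\iota : Y \hookrightarrow \PP'$ (obtained via $X$ and $X'$ respectively) satisfy $f \circ \iota = \iota _\PP$. The whole proof then reduces to Berthelot--Kashiwara applied to these immersions, combined with Virrion's duality commutation.

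Fix $\E \in F\text{-}D ^\mathrm{b} _\mathrm{surcoh} (Y, X, \PP, \QQ/K)$, so that $\R \underline{\Gamma} ^\dag _{Y} (\E) \riso \E$. For the first assertion of (1) I would compute
\[
u _+ \circ u ^! (\E) \;=\; f _+ \circ \R \underline{\Gamma} ^\dag _{Y} \circ f ^! (\E),
\]
interpret $\R \underline{\Gamma} ^\dag _{Y}$ on $\PP'$ via the embedding $\iota$ as $\iota _+ \circ \iota ^!$, and apply transitivity of $(-) _+$ and $(-) ^!$ to factor through the equality $f \circ \iota = \iota _\PP$:
\[
f _+ \, \iota _+ \, \iota ^! \, f ^! (\E)
\;\riso\; (f\circ\iota) _+ \, (f\circ\iota) ^! (\E)
\;=\; (\iota _\PP) _+ \, (\iota _\PP) ^! (\E)
\;\riso\; \R \underline{\Gamma} ^\dag _{Y} (\E) \;\riso\; \E.
\]
The reverse isomorphism $\mathrm{id} \riso u ^! \circ u _+$ is obtained symmetrically, starting from an object $\FF \in F\text{-}D ^\mathrm{b} _\mathrm{surcoh} (Y, X', \PP', \QQ'/K)$ and factoring through the common immersion of $Y$, yielding the quasi-inverse equivalence of categories.

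For (2), the definitions give $u _! = \DD _{Y, \PP} \circ u _+ \circ \DD _{Y, \PP'}$, so $u _+ \riso u _!$ reduces to the commutation $f _+ \circ \DD _{Y, \PP'} \riso \DD _{Y, \PP} \circ f _+$. By \ref{rema-dualZ} I would rewrite each $\DD _{Y, ?}$ as $(\hdag Z) \circ \DD _?$ for the appropriate $Z$, and deduce the commutation from the standard compatibility of direct image with duality (Virrion). The isomorphism $u ^! \riso u ^+$ then follows formally by dualising $u _+ \riso u _!$ via bidualit\'e \ref{bidualite}. The main obstacle is that $g : \PP' \to \PP$ is not globally proper --- only $a$ is --- but after cutting by $\R \underline{\Gamma} ^\dag _{Y}$ every complex in play is supported where $a$ is proper, so both the base-change isomorphism \ref{cgtdebase} and Virrion's theorem apply without modification.
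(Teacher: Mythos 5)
Your reduction of point (1) to Berthelot--Kashiwara does not go through. You write $\R \underline{\Gamma} ^\dag _{Y} = \iota _+ \circ \iota ^!$ for the immersion $\iota \colon Y \hookrightarrow \PP'$, but no such functors $\iota _+$, $\iota ^!$ exist in this setting: $Y$ is an arbitrary locally closed $k$-subvariety of $P'$ (open in the closed subscheme $X'$), it is not assumed smooth, and even when it is smooth it need not lift to a smooth closed formal subscheme of $\PP'$. Berthelot--Kashiwara applies to closed immersions of smooth formal $\V$-schemes, and the local cohomology functor $\R \underline{\Gamma} ^\dag _{Y}$ cannot in general be factored as a direct image composed with an extraordinary inverse image. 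This is exactly the difficulty the paper's argument is built to circumvent: it chooses a smooth $d$-stratification $Y = \sqcup _{i} Y _i$ of $Y$ in $P$ over which $\E$ se d\'evisse en isocristaux surconvergents (via \cite[4.1]{caro-stab-prod-tens}), notes that the smoothness of $f$ makes the same stratification work in $P'$, and reduces by d\'evissage to the isocrystal case of \cite[4.2.3.3]{caro-image-directe}. Without this d\'evissage, or a substitute for it, the isomorphisms $u _{+} \circ u ^{!} \riso id$ and $id \riso u ^{!} \circ u _+$ are not established.

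For point (2) your overall plan (relative duality plus bidualit\'e) agrees with the paper's, but the way you dispose of the non-properness of $g$ is too quick: ``every complex in play is supported where $a$ is proper'' is not by itself a licence to apply Virrion's theorem, which is stated for a proper morphism of formal schemes. The paper makes this precise by factoring $u$ through $(Y, X', f ^{-1}(\PP), \QQ')$: the first factor is an open immersion of formal schemes, where $u ^{!} \riso u ^{+}$ is immediate, and in the second factor the induced map $f ^{-1}(\PP) \to \PP$ \emph{is} proper, being obtained by base change from the morphism $f \colon \QQ' \to \QQ$ of proper formal schemes --- this is where the local properness of the frames (the existence of $\QQ$ and $\QQ'$) is actually used. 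You should supply this factorization, or an equivalent device, in place of the support argument. Your final step, deducing $u ^{!} \riso u ^{+}$ from $u _{+} \riso u _{!}$ by bidualit\'e, is fine once the rest is in place.
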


\begin{proof}
0) Posons $Z := X \setminus Y$ et $Z ':= g ^{-1} (Z)$. 
Comme les foncteurs $u _+$, $u _!$, $u ^!$ et $u ^+$ ne dépendent
pas de $X'$, on peut supposer que $Y$ est dense dans $X'$.
Comme $a$ est propre, on en déduit alors $Y = a ^{-1} (Y)$. 
Il en résulte $X ' \cap Z' = X' \setminus Y$.
D'après \ref{rema-dualZ}, on obtient alors les isomorphismes canoniques
$\DD _{Y, \PP'}  \riso (\hdag Z') \circ \DD _{\PP'}$
et
$\DD _{Y, \PP}  \riso (\hdag Z) \circ \DD _{\PP}$.

1) Comme $a$ est propre, 
le morphisme $u$ est le composé des morphismes de cadres localement propres 
$(Y, X', \PP', \QQ') \to (Y, X', f ^{-1}(\PP), \QQ') \to(Y, X, \PP, \QQ)$.
On se ramène ainsi à l'un de ces deux cas. 
Pour le premier cas, on dispose de l'isomorphisme canonique $u ^{!} \riso u ^{+}$.
Dans le second cas, comme $g$ est alors propre, 
on bénéficie de théorème de dualité relative:
$u _!= (\hdag Z) \circ \DD _{\PP}  \circ g _{+} \circ  (\hdag Z')\circ  \DD _{\PP'}
\riso
(\hdag Z) \circ \DD _{\PP}  \circ (\hdag Z')\circ g _{+} \circ   \DD _{\PP'}
\riso
(\hdag Z) \circ \DD _{\PP}  \circ (\hdag Z') \circ   \DD _{\PP'}\circ g _{+}
\riso g _+ = u _+$.
Il suffit alors de vérifier le premier point du lemme, ce qui  
se vérifie de manière analogue à \cite[4.2.3.4]{caro-image-directe}:
pour tout $\E \in F\text{-}D ^\mathrm{b} _\mathrm{surcoh}  (Y, X, \PP,\QQ/K)$
il existe $Y =\sqcup _{i=1,\dots , r} Y _i$
une $d$-stratification lisse de $Y$ dans $P$ au-dessus de laquelle 
$\E $ se dévisse en isocristaux surconvergents (voir les définitions et résultats de \cite[4.1]{caro-stab-prod-tens}).
Comme $f$ est lisse, 
alors  $Y =\sqcup _{i=1,\dots , r} Y _i$ est aussi
une $d$-stratification lisse de $Y$ dans $P'$.
Afin d'obtenir l'isomorphisme canonique 
$u _{+} \circ u ^{!}(\E)  
\riso 
\E$, 
on se ramène alors par dévissage à la situation de \cite[4.2.3.3]{caro-image-directe}.
De même, 
pour tout $\E '\in F\text{-}D ^\mathrm{b} _\mathrm{surcoh}  (Y, X', \PP',\QQ'/K)$
il existe $Y =\sqcup _{i=1,\dots , r} Y '_i$
une $d$-stratification lisse de $Y$ dans $P'$ au-dessus de laquelle 
$\E '$ se dévisse en isocristaux surconvergents.
Quitte à rétrécir cette stratification, on peut en outre supposer que
$Y =\sqcup _{i=1,\dots , r} Y '_i$
est une $d$-stratification lisse de $Y$ dans $P$.
Afin d'obtenir l'isomorphisme canonique 
$u ^{!} \circ u _{+} (\E')  
\riso 
\E'$,
on se ramène alors par dévissage à la situation de \cite[4.2.3.3]{caro-image-directe}.
\end{proof}

\begin{prop}
\label{eqcat-YXvsP}
Soit $(Y, X, \PP, \QQ)$ un cadre localement propre .
Le foncteur canonique de restriction 
$F\text{-}D ^\mathrm{b} _\mathrm{surcoh} (\D ^\dag _{(Y,X)/K})
\to 
F\text{-}D ^\mathrm{b} _\mathrm{surcoh}  (Y, X, \PP,\QQ/K)$
est une équivalence de catégories.
\end{prop}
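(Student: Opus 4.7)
The plan is to construct an explicit quasi-inverse $R$ to the restriction functor, using the ``graph'' construction of auxiliary product frames together with Lemma~\ref{4.2.3.4} and the base change isomorphism of Lemma~\ref{cgtdebase}. Given $\E \in F\text{-}D ^\mathrm{b} _\mathrm{surcoh}  (Y, X, \PP,\QQ/K)$ and any object $(b,a) \colon (Y', X', \PP', \QQ') \to (Y,X)$ of $\mathfrak{Uni} (Y,X/K)$, first form the auxiliary cadre localement propre $(Y', X', \PP \times \PP', \QQ \times \QQ')$, where the closed immersion $X' \hookrightarrow \PP \times \PP'$ is induced by the graph of $a$ and the given immersion $X' \hookrightarrow \PP'$. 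Let $p$ denote the morphism of cadres to $(Y, X, \PP, \QQ)$ given by $(b,a)$ on schemes and by the first projection on formal schemes, and let $q$ denote the morphism to $(Y', X', \PP', \QQ')$ given by the identity on schemes and the second projection. Since this second projection is smooth (as $\PP$ is smooth) and $\mathrm{id}_{X'}$ is proper, Lemma~\ref{4.2.3.4} ensures that $q _+$ and $q ^{!}$ are mutually quasi-inverse equivalences. One then sets
\[
R(\E)_{(Y', X', \PP', \QQ')} := q _+ \circ p ^{!} (\E).
\]

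For a morphism $u \colon (Y'', X'', \PP'', \QQ'') \to (Y', X', \PP', \QQ')$ of $\mathfrak{Uni}(Y,X/K)$, the induced morphism $\widetilde{u} \colon (Y'', X'', \PP \times \PP'', \QQ \times \QQ'') \to (Y', X', \PP \times \PP', \QQ \times \QQ')$, obtained by acting via $u$ on the second factor and via the identity on the first, produces a commutative square of cadres whose vertical arrows are $q$ and $q'$. Lemma~\ref{cgtdebase} then provides the base-change isomorphism $u^{!} \circ q_+ \riso q'_+ \circ \widetilde{u}^{!}$, and since $p \circ \widetilde{u} = p'$ on the nose, transitivity of $^{!}$ yields a canonical isomorphism $\phi _u \colon u ^{!} (R(\E)_{(Y', X', \PP', \QQ')}) \riso R(\E)_{(Y'', X'', \PP'', \QQ'')}$. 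The cocycle condition for composable morphisms reduces to the naturality and associativity of the base-change isomorphism with respect to horizontal and vertical composition of squares.

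Specialising to $(Y',X',\PP',\QQ') = (Y,X,\PP,\QQ)$, the auxiliary frame is $(Y, X, \PP \times \PP, \QQ \times \QQ)$ with $p = pr_1$ and $q = pr_2$. Isomorphism~\ref{pr2=pr1} combined with the equivalence $q_+ \circ q^{!} \riso \mathrm{id}$ from Lemma~\ref{4.2.3.4} yields a natural isomorphism $R(\E)_{(Y,X,\PP,\QQ)} \riso \E$; hence restriction composed with $R$ is isomorphic to the identity and the restriction functor is essentially surjective. For full faithfulness, observe that for any descent datum $(\FF _{(Y', X', \PP', \QQ')} , \psi _u )$ the transition isomorphisms $\psi _p$ and $\psi _q$ for the auxiliary frame canonically recover $\FF_{(Y',X',\PP',\QQ')}$ as $q_+ p^{!}(\FF_{(Y,X,\PP,\QQ)})$, so that any morphism in the descent category is uniquely determined by, and freely extends from, its component at $(Y,X,\PP,\QQ)$. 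The main technical obstacle is the verification of the cocycle identity for the constructed $\phi_u$; this will reduce to the coherence of the base-change isomorphism~\ref{iso-chgt-base-cadre} under composition of squares, combined with the naturality of the equivalences supplied by Lemma~\ref{4.2.3.4}.
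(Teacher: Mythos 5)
Votre proposition est correcte et suit essentiellement la m\^eme strat\'egie que la preuve du papier : on d\'efinit le quasi-inverse par $\E _{(Y', X', \PP', \QQ')} := pr _{1+} \circ pr _2 ^{!} (\E)$ sur le cadre produit $(Y', X', \PP'\times \PP, \QQ'\times\QQ)$, on utilise le lemme~\ref{4.2.3.4} pour l'\'equivalence le long de la projection et l'isomorphisme~\ref{pr2=pr1} pour l'essentielle surjectivit\'e, puis on recolle via les donn\'ees de descente. La seule nuance est que vous invoquez directement le changement de base~\ref{iso-chgt-base-cadre} pour construire $\phi _u$, l\`a o\`u le papier caract\'erise $c _u$ par pleine fid\'elit\'e de $pr _{1} ^{\prime !}$ afin de rendre la v\'erification du cocycle plus syst\'ematique; les deux constructions co\"{\i}ncident.
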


\begin{proof}
La preuve est analogue à celle de \cite[7.3.11]{LeStum-livreRigCoh}: 
on construit un foncteur canonique quasi-inverse de la manière suivante. 
Soit $\E \in F\text{-}D ^\mathrm{b} _\mathrm{surcoh}  (Y, X, \PP,\QQ/K)$. 
Pour tout objet $(Y', X', \PP', \QQ')$ de $\mathfrak{Uni} (Y,X/K)$, on pose 
$pr _2 \colon (Y', X', \PP'\times \PP , \QQ' \times \QQ) \to (Y, X, \PP, \QQ)$
et
$pr _1 \colon (Y', X', \PP'\times \PP , \QQ' \times \QQ) \to (Y', X', \PP', \QQ')$
les morphismes canoniques de 
$\mathfrak{Uni} (Y,X/K)$.
On obtient alors canoniquement un objet de 
$F\text{-}D ^\mathrm{b} _\mathrm{surcoh}  (Y', X', \PP',\QQ'/K)$
en posant 
$$\E _{(Y', X', \PP', \QQ')} := pr _{1+} \circ pr _2 ^{!} (\E).$$
Soit 
$u=(b,a,g,f) \colon  (Y'', X'', \PP'', \QQ'') \to(Y', X', \PP', \QQ')$ un morphisme de $\mathfrak{Uni} (Y,X/K)$.
Notons 
$pr '_1 \colon (Y'', X'', \PP''\times \PP ', \QQ'' \times \QQ'') \to (Y'', X'', \PP'', \QQ'')$
et
$pr '_2 \colon (Y'', X'', \PP''\times \PP , \QQ'' \times \QQ) \to (Y, X, \PP, \QQ)$
les morphismes canoniques
et
$v= (b,a,g\times id,f\times id ) 
\colon 
(Y'', X'', \PP''\times \QQ, \QQ''\times \QQ) \to(Y', X', \PP'\times \QQ, \QQ'\times \QQ)$.
On dispose de l'isomorphisme canonique
\begin{equation}
\label{cgtbase-u-pre}
pr  _{1} ^{\prime!} \circ u ^{!} \circ pr _{1+} 
\riso
v ^{!} \circ pr  _{1} ^{!} \circ pr _{1+} 
\underset{\ref{4.2.3.4}}{\riso} 
v ^{!} 
\underset{\ref{4.2.3.4}}{\riso}
pr  _{1} ^{\prime !} \circ pr '_{1+} \circ v ^{!} .
\end{equation}
Comme le foncteur 
$pr _{1} ^{\prime !}$ 
est pleinement fidèle, 
on déduit de \ref{cgtbase-u-pre}
l'isomorphisme canonique
$c _u \colon u ^{!} \circ pr _{1+} 
\riso 
pr ' _{1+} \circ v ^{!}$
tel que 
$pr  _{1} ^{\prime!} (c _u)$ soit  égale au composé de \ref{cgtbase-u-pre}.
Ces isomorphismes $c _u$ sont transitifs, i.e.,
si 
$u'\colon  (Y''', X''', \PP''', \QQ''') \to(Y'', X'', \PP'', \QQ'')$ 
est un second morphisme de $\mathfrak{Uni} (Y,X/K)$,
en notant $pr '' _{1}$, $pr '' _{2}$, $v '$ les morphismes comme ci-dessus mais avec des primes en plus,
on dispose alors du diagramme commutatif suivant
\begin{equation}
\label{trans-cu}
\xymatrix @R=0,3cm{
{(u \circ u ') ^! \circ pr  _{1+}} 
\ar[rr] ^-{\sim} _{c _{u \circ u '}}
\ar[d] ^-{\sim}
&& 
{pr '' _{1+} \circ (v \circ v ') ^!} 
\ar[d] ^-{\sim}
\\ 
{u ^{\prime !} \circ u ^! \circ pr  _{1+}} 
\ar[r] ^-{\sim} _{u ^{\prime !} \circ c _{u }}
& 
{u ^{\prime !}  \circ pr ' _{1+}\circ v ^! } 
\ar[r] ^-{\sim} _{ c _{u'} \circ v ^{ !}}
&
{pr '' _{1+} \circ v ^{\prime !} \circ v ^!.} 
}
\end{equation}
En effet, il suffit de vérifier que l'image par 
$pr _{1} ^{\prime \prime!}$ du diagramme \ref{trans-cu} est commutatif. 
Pour cela, 
considérons le diagramme ci-dessous:
\begin{equation}
\notag
\xymatrix @R=0,3cm @C=2cm {
{pr _{1} ^{\prime \prime!}\circ (u \circ u ') ^! \circ pr  _{1+}} 
\ar[rr] ^-{\sim} _{pr _{1} ^{\prime \prime!}(c _{u \circ u '})}
\ar[d] ^-{\sim}
&& 
{pr _{1} ^{\prime \prime!}\circ pr '' _{1+} \circ (v \circ v ') ^!} 
\ar[d] ^-{\sim}
\\
{pr _{1} ^{\prime \prime!}\circ u ^{\prime !} \circ u ^! \circ pr  _{1+}} 
\ar[r] ^-{\sim} _{pr _{1} ^{\prime \prime!}\circ u ^{\prime !} \circ c _{u }}
\ar[d] ^-{\sim}
& 
{pr _{1} ^{\prime \prime!}\circ u ^{\prime !}  \circ pr ' _{1+}\circ v ^! } 
\ar[r] ^-{\sim} _{pr _{1} ^{\prime \prime!}\circ  c _{u'} \circ v ^{ !}}
\ar[d] ^-{\sim}
&
{pr _{1} ^{\prime \prime!}\circ pr '' _{1+} \circ v ^{\prime !} \circ v ^!} 
\ar[d] ^-{\sim} _-{\ref{4.2.3.4}}
\\
{v ^{\prime !} \circ pr _{1} ^{\prime!}\circ  u ^! \circ pr  _{1+}} 
\ar[r] ^-{\sim} _{v ^{\prime !} \circ pr _{1} ^{\prime!} \circ c _{u }}
\ar[d] ^-{\sim}
& 
{v ^{\prime !} \circ pr _{1} ^{\prime!} \circ pr ' _{1+}\circ v ^! } 
\ar[r] ^-{\sim}   _-{\ref{4.2.3.4}}
\ar[d] ^-{\sim}  _-{\ref{4.2.3.4}}
&
{v ^{\prime !} \circ v ^!} 
\ar[d] ^-{\sim}
\\ 
{v ^{\prime !} \circ  v ^! \circ pr _{1} ^{!}\circ  pr  _{1+}} 
\ar[r] ^-{\sim}  _-{\ref{4.2.3.4}}
& 
{v ^{\prime !} \circ v ^!} 
\ar[r] ^-{\sim} 
&
{ (v \circ v ') ^!,} 
}
\end{equation}
dont les isomorphismes non indiqués sont induits par transitivité des foncteurs images inverses extraordinaires.
Les carrés du milieu à droite, en bas à gauche et le grand contour sont commutatifs par définition (voir \ref{cgtbase-u-pre}).
La commutativité des carrés du milieu à gauche et en bas à droite est évidente. 
D'où le résultat.

On construit l'isomorphisme $\phi _u$ via la composition:
$$\phi _u \colon 
u ^{!}(\E _{(Y', X', \PP', \QQ')}) =
u ^{!} \circ pr _{1+} \circ pr _2 ^{!} (\E)
\underset{c _u}{\riso} 
pr ' _{1+} \circ v ^{!} \circ pr _2 ^{!} (\E)
\riso
pr ' _{1+} \circ pr _2 ^{\prime !} (\E)
=
\E _{(Y'', X'', \PP'', \QQ'')}.$$
Considérons le diagramme ci-dessous:
\begin{equation}
\notag
\xymatrix{
{u ^{\prime !} \circ u ^{!} \circ pr _{1+} \circ pr _2 ^{!} (\E)}
\ar[r] ^-{c _u} 
\ar[d] ^-{}
& 
{u ^{\prime !} \circ pr ' _{1+} \circ v ^{!} \circ pr _2 ^{!} (\E)} 
\ar[d] ^-{c _{u'}}
\ar[r] ^-{}
& 
{u ^{\prime !} \circ pr ' _{1+} \circ pr _2 ^{\prime !} (\E)} 
\ar[d] ^-{c _{u'}}
\\ 
{(u \circ u ^{\prime }) ^{!} \circ pr _{1+} \circ pr _2 ^{!} (\E)}
\ar[rd] ^-{c _{u\circ u'}}
& 
{pr '' _{1+} \circ v ^{\prime !} \circ  v ^{!} \circ pr _2 ^{!} (\E)} 
\ar[r] ^-{}
\ar[d] ^-{}
&
{pr '' _{1+} \circ  v ^{\prime !} \circ pr _2 ^{\prime !} (\E)} 
\ar[d] ^-{}
\\
{ }
&
{pr '' _{1+} \circ  (v  \circ  v ^{\prime }) ^{!} \circ pr _2 ^{!} (\E)} 
\ar[r] ^-{}
&
{pr '' _{1+} \circ pr _2 ^{\prime \prime !} (\E).} 
}
\end{equation}
Le trapèze est commutatif d'après \ref{trans-cu}, les carrés aussi. 
La commutativité de ce diagramme signifie que les isomorphismes $\phi _u $ satisfont à la condition de cocycle.
Notons alors  $\beta \colon 
F\text{-}D ^\mathrm{b} _\mathrm{surcoh}  (Y, X, \PP,\QQ/K)
\to 
F\text{-}D ^\mathrm{b} _\mathrm{surcoh} (\D ^\dag _{(Y,X)/K})$
le foncteur défini par $\E \mapsto (\E _{(Y', X', \PP', \QQ')} , \phi _u)$.

Vérifions à présent que $\beta $ est canoniquement quasi-inverse du foncteur restriction noté
$\alpha \colon F\text{-}D ^\mathrm{b} _\mathrm{surcoh} (\D ^\dag _{(Y,X)/K})
\to 
F\text{-}D ^\mathrm{b} _\mathrm{surcoh}  (Y, X, \PP,\QQ/K)$.
Soit $\E \in F\text{-}D ^\mathrm{b} _\mathrm{surcoh}  (Y, X, \PP,\QQ/K)$.
Avec les notations ci-dessus utilisées avec
$(Y', X', \PP', \QQ')= (Y, X, \PP, \QQ)$, on obtient les isomorphismes canoniques:
$$\alpha \circ \beta (\E)= \E _{(Y, X, \PP, \QQ)} = pr _{1+} \circ pr _2 ^{!} (\E)
\underset{\ref{pr2=pr1}}{\riso} 
pr _{1+} \circ pr _1 ^{!} (\E)
\riso \E.$$

Réciproquement soit 
$(\FF _{(Y', X', \PP', \QQ')} , \psi _u)
\in 
F\text{-}D ^\mathrm{b} _\mathrm{surcoh} (\D ^\dag _{(Y,X)/K})$.
Posons 
$\E:=\alpha (\FF _{(Y', X', \PP', \QQ')} , \phi _u)=
\FF _{(Y, X, \PP, \QQ)}$
et
$(\E _{(Y', X', \PP', \QQ')} , \phi _u):= 
\beta \circ \alpha (\E _{(Y', X', \PP', \QQ')} , \phi _u)
= 
\beta  (\E)$.
Comme le foncteur $pr _1 ^{!} $ est pleinement fidèle, il existe un unique
isomorphisme
$$\psi ^\sharp _{pr _1}\colon 
\FF _{(Y', X', \PP', \QQ')}
\riso 
pr _{1+}
(\FF _{(Y', X', \PP'\times \PP, \QQ'\times \QQ)})$$
tel que $pr _1 ^{!} (\psi ^\sharp  _{pr _1} ) $ soit l'isomorphisme composé:
$$pr _1 ^{!} (\psi ^\sharp _{pr _1} ) \colon 
pr _1 ^{!} (\FF _{(Y', X', \PP', \QQ')})
\underset{\psi _{pr _1}}{\riso}
\FF _{(Y', X', \PP'\times \PP, \QQ'\times \QQ)}
\riso 
pr _1 ^{!} \circ pr _{1+}
(\FF _{(Y', X', \PP'\times \PP, \QQ'\times \QQ)}).$$
On obtient alors l'isomorphisme:
\begin{equation}
\label{EisoF}
\E _{(Y', X', \PP', \QQ')} := pr _{1+} \circ pr _2 ^{!} (\E)
\underset{pr _{1+} ( \psi _{pr _2})}{\riso}
pr _{1+}  (\FF _{(Y', X', \PP'\times \PP, \QQ'\times \QQ)})
\underset{\psi ^\sharp _{pr _1}}{\liso}
\FF _{(Y', X', \PP', \QQ')}.
\end{equation}
Il reste à valider que la famille d'isomorphismes \ref{EisoF} commute au morphisme $\phi _u$ et $\psi _u$, i.e. que le diagramme
ci-dessous
\begin{equation}
\label{eqcat-YXvsP-diag}
\xymatrix  @R=0,3cm @C=2cm {
{u ^{!} \circ pr _{1+} \circ pr _2 ^{!} (\E)} 
\ar[r] ^-{u ^{!} \circ pr _{1+} ( \psi _{pr _2})} _-{\sim}
\ar[d] ^-{\sim} _-{c _u}
&
{u ^{!} \circ pr _{1+}  (\FF _{(Y', X', \PP'\times \PP, \QQ'\times \QQ)})} 
\ar[d] ^-{\sim} _-{c _u}
& 
{u ^{!} (\FF _{(Y', X', \PP', \QQ')})} 
\ar[l] ^-{u ^{!} \circ \psi ^\sharp _{pr _1}} _-{\sim}
\ar[dd] ^-{\psi _u} _-{\sim}
\\ 
{pr ' _{1+} \circ v ^{!} \circ  pr _2 ^{!} (\E)} 
\ar[r] ^-{pr ' _{1+} \circ v ^{!} ( \psi _{pr _2})} _-{\sim}
\ar[d] ^-{\sim} 
&
{pr ' _{1+} \circ v ^{!}  (\FF _{(Y', X', \PP'\times \PP, \QQ'\times \QQ)})} 
\ar[d] ^-{pr ' _{1+} ( \psi _v)} _-{\sim}
& 
{} 
\\
{pr ' _{1+} \circ pr _2 ^{\prime !} (\E)} 
\ar[r] ^-{pr ' _{1+} ( \psi _{pr '_2})} _-{\sim}
&
{pr '_{1+}  (\FF _{(Y'', X'', \PP''\times \PP, \QQ''\times \QQ)})} 
& 
{\FF _{(Y'', X'', \PP''\times \PP, \QQ''\times \QQ)}} 
\ar[l] ^-{\psi ^\sharp _{pr ' _1}} _-{\sim}
 }
\end{equation}
 est commutatif.
 Les carrés de gauche étant clairement commutatifs, il reste à vérifier celui de droite.
 Or, on vérifie que le contour du diagramme ci-dessous
\begin{equation}
\label{eqcat-YXvsP-diag-dcom}
\xymatrix{
{pr ^{\prime !} _1 \circ u ^{!} \circ pr _{1+}  (\FF _{(Y', X', \PP'\times \PP, \QQ'\times \QQ)})} 
\ar[d] ^-{\sim} 
\ar@/_8pc/[dd] ^-{\sim} _-{pr ^{\prime !} _1 (c _u)}
&& 
{pr ^{\prime !} _1 \circ u ^{!} (\FF _{(Y', X', \PP', \QQ')})} 
\ar[ll] ^-{pr ^{\prime !} _1 \circ u ^{!} \circ \psi ^\sharp _{pr _1}} _-{\sim}
\ar[d] ^-{\sim} 
\\ 
{v ^{!} \circ pr ^{!} _1 \circ pr _{1+}  (\FF _{(Y', X', \PP'\times \PP, \QQ'\times \QQ)})} 
\ar[drr] _-{\sim} 
&& 
{v ^{!} \circ pr ^{!} _1  (\FF _{(Y', X', \PP', \QQ')})} 
\ar[ll] ^-{v ^{!} \circ pr ^{!} _1 \circ \psi ^\sharp _{pr _1}} _-{\sim}
\ar[d] ^-{\psi _{pr _1}} _-{\sim}
\\
{pr ^{\prime !} _1 \circ pr '_{1+}  \circ v ^{!}  (\FF _{(Y', X', \PP'\times \PP, \QQ'\times \QQ)})} 
\ar[rr]  _-{\sim}
\ar[d] ^-{\sim} _-{\psi _v} 
&& 
{v ^{!}   (\FF _{(Y', X', \PP'\times \PP, \QQ'\times \QQ)})} 
\ar[d] ^-{\sim} _-{\psi _v} 
\\
{pr ^{\prime !} _1 \circ pr '_{1+}   (\FF _{(Y'', X'', \PP''\times \PP, \QQ''\times \QQ)})} 
&
{pr ^{\prime !} _1  (\FF _{(Y'', X'', \PP'', \QQ'')})} 
\ar[r]  ^-{\psi _{pr '_1}} _-{\sim}
\ar[l]  ^-{\psi ^\sharp _{pr '_1}} _-{\sim}
& 
{v ^{!}   (\FF _{(Y', X', \PP'\times \PP, \QQ'\times \QQ)})} 
}
\end{equation}
est l'image par $pr ^{\prime !} _1$ du carré de droite de \ref{eqcat-YXvsP-diag}.
Comme le diagramme \ref{eqcat-YXvsP-diag-dcom} est commutatif, on en déduit le résultat.

\end{proof}

\section{Modules surcohérents et isocristaux surcohérents}

\begin{defi}
\label{d-uni}
Soit $(Y,X)/K$ un couple.
On note $d\text{-}\mathfrak{Uni} (Y,X/K)$ 
la catégorie des $d$-cadres localement propres au-dessus de $(Y,X)$.
Un objet de $d\text{-}\mathfrak{Uni} (Y,X/K)$ est ainsi la donnée d'un $d$-cadre localement propre 
$(\QQ', \PP', T', X', Y')$  
et d'un morphisme de 
$\mathfrak{Cpl}$
de la forme $(b,a) \colon (Y',X') \to (Y,X)$. 
On note $(b,a) \colon (\QQ', \PP', T', X', Y') \to (Y,X)$ un tel objet ou par abus de notations $(\QQ', \PP', T', X', Y')$.
Les morphismes 
$((\QQ'', \PP'', T'', X'', Y''), (b',a') )
\to
((\QQ', \PP', T', X', Y'), (b,a) )$
de $d\text{-}\mathfrak{Uni} (Y,X/K)$ sont les morphismes 
$(f,g,c,d)\colon (\QQ'', \PP'', T'', X'', Y'')
\to
(\QQ', \PP', T', X', Y')$
de cadres localement propres
tels que 
$(b,a) \circ (d,c) =(b',a') $.
On peut abusivement noter
$u\colon (\QQ'', \PP'', T'', X'', Y'')
\to
(\QQ', \PP', T', X', Y')$ un tel morphisme. 
\end{defi}

\begin{nota}
\label{isoc-def}
Soit $(\PP, T,  X, Y)$ 
un $d$-cadre tel que $Y$ soit lisse.
On note 
$F\text{-}\mathrm{Isoc} ^{\dag \dag}  (\PP, T, X, Y)$
la sous-catégorie pleine de
$F\text{-}\mathrm{Surhol}  (\PP, T, X, Y)$ des $F$-isocristaux surconvergents
sur $ (\PP, T, X, Y)$, i.e. 
des objets dans l'image essentielle du foncteur 
$\sp _{X \to \PP, T,+}$.
\end{nota}

\begin{lemm}
\label{lemm-annul-div}
Soit 
$u= (f, a,b) \colon (\PP', T', X', Y') \to (\PP, T,  X, Y)$ 
un morphisme de $d$-cadres.
\begin{enumerate}
\item Si $Y$ est lisse, le foncteur 
$u ^{!} [-d _{Y'/Y}]$ est exact sur 
$F\text{-}\mathrm{Isoc} ^{\dag \dag}  (\PP, T, X, Y)$.
\item Si $b$ est lisse, alors le foncteur 
$u ^{!} [-d _{Y'/Y}]$ est exact sur 
$F\text{-}\mathrm{Surhol}  (\PP, T, X, Y)$.
\end{enumerate}

\end{lemm}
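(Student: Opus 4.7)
La strat\'egie est de ramener (2) \`a (1) par d\'evissage, puis d'\'etablir (1) en factorisant $f$ via le graphe et en exploitant la compatibilit\'e de la sp\'ecialisation avec l'image inverse extraordinaire.

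Tout objet $\E$ de $F\text{-}\mathrm{Surhol}(\PP, T, X, Y)$ admet un d\'evissage le long d'une $d$-stratification lisse $Y = \sqcup _{i=1}^{r} Y _i$ en isocristaux surconvergents sur les strates (voir \cite[\S 4.1]{caro-stab-prod-tens}). Lorsque $b$ est lisse, le pull-back de cette stratification par $b$ reste une $d$-stratification lisse de $Y'$, et les morphismes induits sur les strates sont lisses de dimension relative $d_{Y'/Y}$. En appliquant $u ^{!}[-d_{Y'/Y}]$ aux triangles distingu\'es du d\'evissage, l'exactitude sur chaque strate d\'ecoulera de (1), et un argument de cinq lemmes joint \`a une r\'ecurrence sur la longueur de la stratification donne l'exactitude pour $\E$. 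Ainsi (2) d\'ecoule de (1).

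Pour \'etablir (1), on factorise $f$ via le graphe $\PP' \hookrightarrow \PP' \times _\V \PP \to \PP$ (immersion ferm\'ee suivie d'une projection lisse), ce qui r\'eduit, par transitivit\'e des images inverses extraordinaires, aux deux cas o\`u $f$ est lisse ou bien une immersion ferm\'ee. Dans les deux cas, l'argument repose sur la compatibilit\'e de $\sp _+$ avec $f ^{!}$: pour un isocristal $\E = \sp _{X\to\PP, T, +}(E)$ on dispose d'un isomorphisme canonique
\begin{equation*}
f ^{!} \E [-d_{Y'/Y}] \riso \sp _{X' \to \PP', T', +}(b _{\rig} ^{*} E),
\end{equation*}
o\`u $b _\rig ^{*}$ d\'esigne le pull-back analytique (cas lisse) ou la restriction (cas d'une immersion ferm\'ee) des isocristaux surconvergents. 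Comme le membre de droite est d\'ej\`a \`a support dans $Y'$, le foncteur $\R \underline{\Gamma} ^\dag _{Y'}$ agit trivialement, et $u ^{!} \E [-d_{Y'/Y}]$ est un objet de $F\text{-}\mathrm{Isoc} ^{\dag\dag}(\PP', T', X', Y')$, en particulier un module concentr\'e en degr\'e z\'ero.

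La principale difficult\'e technique est l'\'etablissement de cet isomorphisme de compatibilit\'e avec le d\'ecalage correct par la dimension relative dans chacun des deux cas \'el\'ementaires; dans le cas lisse il s'agit d'un \'enonc\'e de dualit\'e relative standard pour les $\D ^\dag$-modules coh\'erents, tandis que dans le cas d'une immersion ferm\'ee il s'agit d'une variante relative du th\'eor\`eme de Berthelot-Kashiwara adapt\'ee aux isocristaux surconvergents sur un $d$-cadre.
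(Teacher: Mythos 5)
There is a genuine gap: your argument never uses the hypothesis, built into the definition of a $d$-cadre, that $T$ and $T'$ are \emph{divisors}. This hypothesis is essential: Remark \ref{rema-cadvsd-cad} exhibits a morphism of cadres (with $X\setminus Y$ of codimension $2$ in $P$) for which the conclusion fails, and your proof as written would apply verbatim to that situation. The divisor condition must intervene precisely at the step you dismiss as trivial, namely the claim that the left-hand side of your displayed isomorphism is ``already supported in $Y'$'' so that $\R \underline{\Gamma} ^\dag _{Y'}$ acts trivially. This is false: $f ^{!} (\E)$ is supported on $f ^{-1}(\mathrm{supp}\,\E)$, which in general strictly contains $Y'$ (already for $f=\mathrm{id}$ and $b$ a closed immersion), so the isomorphism cannot hold with $f ^{!}$ alone on the left; it must involve $u ^{!}=\R \underline{\Gamma} ^\dag _{Y'}\circ f ^{!}$, and controlling the exactness of the correcting factor $(\hdag T')$ --- valid only because $T'$ is a divisor --- is exactly what is at stake. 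Once corrected, your ``compatibility isomorphism'' $u ^{!}(\E)[-d _{Y'/Y}]\riso \sp _{X'\to \PP ',T',+}(b _{\mathrm{rig}} ^{*}E)$ is a strengthening of the statement to be proved (and it presupposes $Y'$ smooth, which is not assumed in part (1)); you defer its proof as ``the main technical difficulty'', so the core of the lemma remains unproved.

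For comparison, the paper's proof is three lines and runs in the opposite order: since $T'$ is a divisor, exactness may be checked after restriction to the complement of $T'$ (the standard argument of \cite[4.3.12]{Be1} on the functor $(\hdag T')$); the question being local, one may assume $Y'$ smooth; and then $Y'$ is \emph{closed} in the ambient open formal scheme, so the Berthelot--Kashiwara theorem identifies $u ^{!}[-d _{Y'/Y}]$ with an ordinary, exact inverse image computed on the smooth varieties themselves. Your d\'evissage of (2) onto (1) along a $d$-stratification is a workable alternative to the paper's uniform treatment of both cases, but it becomes unnecessary once the localization-outside-$T'$ step is in place, and in any event it cannot repair the missing justification of the key isomorphism in part (1).
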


\begin{proof}
Comme $T$ et $T'$ sont des diviseurs, il suffit de le vérifier en dehors de $T'$ (via l'argument bien connu de  \cite[4.3.12]{Be1}). 
De plus, comme cela est local, on peut supposer $Y'$ lisse. 
Le lemme est alors immédiat grâce au théorème de Berthelot-Kashiwara.
\end{proof}

\begin{rema}
\label{rema-cadvsd-cad}
Le lemme \ref{lemm-annul-div} et faux pour les morphismes de cadres . 
Par exemple, si $\PP= \widehat{\A} ^2 _{\V}$ et $U$ est $P$ privé de l'origine noté $O$ et 
$u\colon (\PP,  O, P, U) \to (\PP, \emptyset, P,P)$ est le morphisme canonique.
\end{rema}

\begin{defi}
\label{defi-isoc}
Grâce au lemme \ref{lemm-annul-div}, 
dans la définition de \ref{defi-surcoh-cadre},
en remplaçant les catégories de la forme 
$F\text{-}D ^\mathrm{b} _\mathrm{surcoh}$ par celles de la forme $F\text{-}\mathrm{Isoc} ^{\dag \dag}$,
en remplaçant $\mathfrak{Uni} (Y,X/K)$
par $d\text{-}\mathfrak{Uni} (Y,X/K)$
et  
en remplaçant les termes de la forme
$u ^! $ par $u ^![-d _{Y''/Y'}]$, on construit
la catégorie $F\text{-}\mathrm{Isoc} ^{\dag \dag} (Y,X/K)$ des $F$-isocristaux surcohérents sur $(Y,X)/K$.
\end{defi}

\begin{defi}
\label{d-uni-zar}
Soit $(Y,X)/K$ un couple.
On note $d\text{-}\mathfrak{Zar} (Y,X/K)$ 
la sous-catégorie pleine de 
$d\text{-}\mathfrak{Uni} (Y,X/K)$ 
dont les objets 
$(\QQ', \PP', T', X', Y')$ sont tels que 
la flèche structurale $Y' \to Y$ soit une immersion ouverte.

Grâce au lemme \ref{lemm-annul-div}, 
dans la définition de \ref{defi-surcoh-cadre},
en remplaçant les catégories de la forme 
$F\text{-}D ^\mathrm{b} _\mathrm{surcoh}$ par celles de la forme $F\text{-}\mathrm{Surcoh}$,
en remplaçant $\mathfrak{Uni} (Y,X/K)$
par $d\text{-}\mathfrak{Zar} (Y,X/K)$, 
on construit
la catégorie $F\text{-}\mathrm{Surcoh}(Y,X/K)$ des $\D$-modules arithmétiques surcohérents sur $(Y,X)/K$.
 
\end{defi}

\begin{vide}
Soit $(\QQ,\PP, T,X, Y)$  un $d$-cadre localement propre.
De manière analogue à \ref{eqcat-YXvsP}, 
les foncteurs canoniques de restriction 
$F\text{-}\mathrm{Isoc} ^{\dag \dag} (Y,X/K)
\to 
F\text{-}\mathrm{Isoc} ^{\dag \dag}  (\QQ,\PP, T,X, Y)$
et
$F\text{-}\mathrm{Surcoh}(Y,X/K)
\to 
F\text{-}\mathrm{Surcoh}  (\QQ,\PP, T,X, Y)$
sont des équivalences de catégories.
Pour tout couple $(Y,X)/K$, 
on vérifie de plus 
que $F\text{-}\mathrm{Isoc} ^{\dag \dag} (Y,X/K)$ est une sous-catégorie pleine de 
$F\text{-}\mathrm{Surcoh}(Y,X/K)$.

\end{vide}

\section{Les trois opérations cohomologiques pour les catégories de complexes de type surcohérent}

\begin{vide}
\label{prod-tens-surcoh}
Soit $(Y,X)/K$ un couple.
On définit le bifoncteur  produit tensoriel
 $$-
\smash{\overset{\L}{\otimes}}   ^{\dag}
_{\O  _{(Y, X)/K}}
-
\colon
F\text{-}D ^\mathrm{b} _\mathrm{surcoh} (\D ^\dag _{(Y,X)/K})
\times 
F\text{-}D ^\mathrm{b} _\mathrm{surcoh} (\D ^\dag _{(Y,X)/K})
\to 
F\text{-}D ^\mathrm{b} _\mathrm{surcoh} (\D ^\dag _{(Y,X)/K})$$ 
en posant, pour tous objets  
$\E= (\E _{(Y', X', \PP', \QQ')} , \phi _u ) $ 
et
$\FF= (\FF _{(Y', X', \PP', \QQ')} , \psi _u ) $ 
de 
$F\text{-}D ^\mathrm{b} _\mathrm{surcoh} (\D ^\dag _{(Y,X)/K})$ 
$$\E
\smash{\overset{\L}{\otimes}}   ^{\dag}
_{\O  _{(Y, X)/K}}
\FF
:= (\E _{(Y', X', \PP', \QQ')}
\smash{\overset{\L}{\otimes}}   ^{\dag}
_{\O  _{(Y', \PP')}}
\FF _{(Y', X', \PP', \QQ')}
[ -d _{Y'/Y}], 
\theta _u),$$
où pour tout morphisme
$u\colon  (Y'', X'', \PP'', \QQ'') \to(Y', X', \PP', \QQ')$ de $\mathfrak{Uni} (Y,X/K)$, 
$\theta _u$ est l'isomorphisme composé
\begin{gather}
\notag
\theta _u
\colon 
u ^{!} (\E _{(Y', X', \PP', \QQ')}
\smash{\overset{\L}{\otimes}}   ^{\dag}
_{\O  _{(Y', \PP')}}
\FF _{(Y', X', \PP', \QQ')}
[ -d _{Y'/Y}])
\riso 
u ^{!} (\E _{(Y', X', \PP', \QQ')} )
\smash{\overset{\L}{\otimes}}   ^{\dag}
_{\O  _{(Y'', \PP'')}}
u ^{!} (\FF _{(Y', X', \PP', \QQ')})
[ -d _{Y''/Y}]
\\
\notag
\underset{\phi _u \otimes \psi _u}{\riso} 
\E _{(Y'', X'', \PP'', \QQ'')}
\smash{\overset{\L}{\otimes}}   ^{\dag}
_{\O  _{(Y'', \PP'')}}
\FF _{(Y'', X'', \PP'', \QQ'')}
[ -d _{Y''/Y}].
\end{gather}
\end{vide}

\begin{vide}
\label{iminvextsurcoh}
Soit $(b,a) \colon (Y', X' ) \to (Y,X)$ un morphisme de couples. 
Comme $\mathfrak{Uni} (Y',X'/K)$ est une sous-catégorie 
de
$\mathfrak{Uni} (Y,X/K)$, on dispose donc du foncteur restriction
$F\text{-}D ^\mathrm{b} _\mathrm{surcoh} (\D ^\dag _{(Y,X)/K})
\to 
F\text{-}D ^\mathrm{b} _\mathrm{surcoh} (\D ^\dag _{(Y',X')/K})$
que l'on notera $(b,a) ^{!}$ et que l'on appellera aussi image inverse extraordinaire par 
$(b,a)$.
Ce foncteur est clairement transitif: 
pour tout autre morphisme de couples de la forme 
$(b',a') \colon (Y'', X'' ) \to (Y',X')$, on bénéficie de l'isomorphisme canonique:
\begin{equation}
\label{iminvextsurcoh-trans}
(b',a') ^{!} \circ (b,a) ^{!} 
\riso 
(b\circ b',a\circ a') ^{!}.
\end{equation}
 
\end{vide}

\begin{defi}
\label{defi-realis}
Soit $(b,a) \colon (\widetilde{Y},\widetilde{X}) \to (Y,X)$ un morphisme complet de couples. 
\begin{itemize}
\item On dit que $(b,a)$ est strictement réalisable si
pour tout objet $(Y', X', \PP', \QQ')$ de $\mathfrak{Uni} (Y,X/K)$
il existe 
un objet de $\mathfrak{Uni} (\widetilde{Y},\widetilde{X}/K)$
de la forme 
$(\widetilde{Y} \times _Y Y',\widetilde{X} \times _X X', \widetilde{\PP} ',\widetilde{\QQ} ')$.

\item On dit que $(b,a)$ est réalisable si
pour tout objet $(Y', X', \PP', \QQ')$ de $\mathfrak{Uni} (Y,X/K)$
il existe 
un morphisme de couples 
de la forme
$(id, c) \colon 
(\widetilde{Y} \times _Y Y',\widetilde{X} ')
\to 
(\widetilde{Y} \times _Y Y',\widetilde{X} \times _X X')$
avec $c$ propre
et
un objet de 
$\mathfrak{Cad}$
de la forme 
$(\widetilde{Y} \times _Y Y',\widetilde{X}', \widetilde{\PP} ',\widetilde{\QQ} ')$.

Remarquons que quitte à considérer 
$\widetilde{\QQ} ' \times \QQ$
à la place de 
$\widetilde{\QQ} '$, on peut supposer qu'il existe
un morphisme de la forme 
$(b',a'\circ c,g',f')\colon 
(\widetilde{Y} \times _Y Y',\widetilde{X} ', \widetilde{\PP} ',\widetilde{\QQ} ')
\to 
(Y', X', \PP', \QQ')$
avec $f'$ lisse, 
$\widetilde{\PP} '= f ^{\prime -1} (\PP ')$ et où $b'$ et $a'$ sont les projections canoniques
(en effet, comme $a$ et $c$ sont propres, alors $a' \circ c$ aussi ; par conséquent le plongement de
$ \widetilde{X} '$ dans 
$ \widetilde{\PP} '$ est fermé). 

\end{itemize}
\end{defi}

\begin{prop}
\label{theo-iminvextsurcoh}
Soit $(b,a) \colon (Y', X' ) \to (Y,X)$ un morphisme complet de couples.
\begin{enumerate}
\item Si $a$ est projectif alors $(b,a)$ est strictement réalisable. 
\item  Si $b$ est quasi-projectif, alors  $(b,a)$ est réalisable. 
\end{enumerate}
\end{prop}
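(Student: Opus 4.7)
The plan is to handle (1) directly via a projective factorization of $a$, and then to deduce (2) by reducing it to (1) through Chow's lemma.

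For (1), given $(Y', X', \PP', \QQ') \in \mathfrak{Uni}(Y,X/K)$, the projectivity of $a$ supplies a closed immersion $\widetilde{X} \hookrightarrow \P^n \times X$ over $X$. Base-changing along $X' \to X$ and composing with the closed immersion $X' \hookrightarrow \PP'$ yields a closed immersion $\widetilde{X} \times_X X' \hookrightarrow \widehat{\P}^n_{\V} \times_{\V} \PP'$. Setting $\widetilde{\PP}' := \widehat{\P}^n_{\V} \times_{\V} \PP'$ and $\widetilde{\QQ}' := \widehat{\P}^n_{\V} \times_{\V} \QQ'$ produces a locally proper frame $(\widetilde{Y} \times_Y Y', \widetilde{X} \times_X X', \widetilde{\PP}', \widetilde{\QQ}')$ sitting naturally over $(\widetilde{Y}, \widetilde{X})$: the ambient formal schemes are smooth, $\widetilde{\QQ}'$ is proper, and $\widetilde{\PP}' \hookrightarrow \widetilde{\QQ}'$ is an open immersion. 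This witnesses strict realizability.

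For (2), I would first reduce to the case where $\widetilde{Y}$ is dense in $\widetilde{X}$ by replacing $\widetilde{X}$ with the schematic closure $\overline{\widetilde{Y}}$: realizability of the restricted morphism implies that of $(b, a)$ by post-composing the comparison $c$ with the closed immersion $\overline{\widetilde{Y}} \times_X X' \hookrightarrow \widetilde{X} \times_X X'$. Since $b$ is quasi-projective and $Y \hookrightarrow X$ an open immersion, the composite $\widetilde{Y} \to X$ is quasi-projective. I would then invoke the refined form of Chow's lemma which, given a dense quasi-projective open, produces a projective birational morphism $p \colon \widetilde{X}^{\sharp} \to \widetilde{X}$ that is an isomorphism over $\widetilde{Y}$ and with $a \circ p$ projective. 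Writing $\widetilde{Y}^{\sharp} := p^{-1}(\widetilde{Y}) \cong \widetilde{Y}$, the morphism $(b, a \circ p) \colon (\widetilde{Y}^{\sharp}, \widetilde{X}^{\sharp}) \to (Y, X)$ is complete with projective underlying morphism, hence strictly realizable by (1). For any $(Y', X', \PP', \QQ') \in \mathfrak{Uni}(Y,X/K)$, the resulting strict realization provides a frame $(\widetilde{Y} \times_Y Y', \widetilde{X}^{\sharp} \times_X X', \widetilde{\PP}', \widetilde{\QQ}')$; taking $\widetilde{X}' := \widetilde{X}^{\sharp} \times_X X'$ and $c$ equal to the base change of $p$ along $X' \to X$ gives a proper morphism $c \colon \widetilde{X}' \to \widetilde{X} \times_X X'$ restricting to the identity on $\widetilde{Y} \times_Y Y'$, which is precisely the realizability data required.

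The main obstacle will be invoking Chow's lemma in the refined form that controls the isomorphism locus: this requires $\widetilde{Y}$ to be both dense in $\widetilde{X}$ and quasi-projective over $X$. The preliminary reduction to $\overline{\widetilde{Y}}$ secures density, and the quasi-projectivity of $b$ combined with the openness of $Y \hookrightarrow X$ supplies the hypothesis that makes this form of the lemma applicable. Once that point is in hand, the bootstrap from (1) to (2) is routine.
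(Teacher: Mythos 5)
Your proof follows essentially the same route as the paper: part (1) by the direct product-with-$\widehat{\P}^n_{\V}$ construction that the paper dismisses as immediate, and part (2) by invoking the precise Chow lemma of Gruson--Raynaud to produce a projective modification $p$ of $\widetilde{X}$, an isomorphism over $\widetilde{Y}$, with $a\circ p$ projective, then bootstrapping from (1). Your version merely spells out the density reduction and the verification of the realizability data that the paper leaves implicit.
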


\begin{proof}
Lorsque $a$ est projectif, il est immédiat que $(b,a)$ soit strictement réalisable.
Lorsque $b$ est quasi-projectif, 
on reprend l'argument technique invoqué lors de la preuve du théorème analogue en cohomologie rigide (i.e. \cite[2.3.5]{Berig}):
grâce au lemme de Chow précis de Gruson-Rayaud (voir \cite[5.7.14]{Gruson_Raynaud-platprojectif}): 
il existe un morphisme de couples
$(id,c)\colon (Y', X'') \to (Y', X ') $ tel que $c$ soit projectif et $a \circ c$ soit projectif, ce qui entraîne que $(a,b)$ est réalisable. 
\end{proof}

\begin{vide}
[Construction de l'image directe par un morphisme réalisable de couples]
\label{defi-ba+}
Soit $(b,a) \colon (\widetilde{Y},\widetilde{X}) \to (Y,X)$ un morphisme réalisable de couples  (voir
\ref{defi-realis}). 
On définit le foncteur image directe par $(b,a)$ de la manière suivante. 
Soit 
$\widetilde{\E}= (\widetilde{\E} _{(\widetilde{Y} ',\widetilde{X}', \widetilde{\PP} ',\widetilde{\QQ} ')} , \widetilde{\phi} _u ) \in 
F\text{-}D ^\mathrm{b} _\mathrm{surcoh} (\D ^\dag _{(\widetilde{Y} ,\widetilde{X})/K})$.
Pour tout objet $(Y', X', \PP', \QQ')$ de $\mathfrak{Uni} (Y,X/K)$, 
choisissons 
un morphisme de couples 
de la forme
$(id, c) \colon 
(\widetilde{Y} \times _Y Y',\widetilde{X} ')
\to 
(\widetilde{Y} \times _Y Y',\widetilde{X} \times _X X')$
avec $c$ propre
tel qu'il existe
un morphisme de $\mathfrak{Uni} (Y,X/K)$ de la forme 
$(b, a) _{(Y', X', \PP', \QQ')}:= (b',a'\circ c,g',f')\colon 
(\widetilde{Y} \times _Y Y',\widetilde{X} ', \widetilde{\PP} ',\widetilde{\QQ} ')
\to 
(Y', X', \PP', \QQ')$
avec $f'$ lisse et où $b'$ et $a'$ sont les projections canoniques 
(il en existe au moins un d'après \ref{defi-realis}). 
On pose 
$$\E _{(Y', X', \PP', \QQ')}:=
(b, a) _{(Y', X', \PP', \QQ')+}  
 (\widetilde{\E} _{(\widetilde{Y} \times _Y Y',\widetilde{X} ', \widetilde{\PP} ',\widetilde{\QQ} ')} ).$$
D'après \ref{4.2.3.4}, cet objet ne dépend pas, à isomorphisme canonique près, 
du choix d'un tel morphisme  $(b, a) _{(Y', X', \PP', \QQ')}$.
Pour toute flèche
$u\colon  (Y'', X'', \PP'', \QQ'') \to(Y', X', \PP', \QQ')$ de $\mathfrak{Uni} (Y,X/K)$,
avec \ref{4.2.3.4}, 
quitte à changer
$(b, a) _{(Y'', X'', \PP'', \QQ'')}$, 
on peut choisir
$(b, a) _{(Y'', X'', \PP'', \QQ'')}:= (b'',a'',g'',f'')\colon 
(\widetilde{Y} \times _Y Y'',\widetilde{X} '  \times _{X'} X'',  \widetilde{\PP} '\times _{\PP '} \PP'' ,\widetilde{\QQ} '\times _{\QQ '} \QQ'' )
\to 
(Y'', X'', \PP'', \QQ'')$ égal à la projection canonique. 
Notons alors $\widetilde{u}\colon 
(\widetilde{Y} \times _Y Y'',\widetilde{X} '  \times _{X'} X'',  \widetilde{\PP} '\times _{\PP '} \PP'' ,\widetilde{\QQ} '\times _{\QQ '} \QQ'' )
\to 
(\widetilde{Y} \times _Y Y',\widetilde{X} ', \widetilde{\PP} ',\widetilde{\QQ} ')$ la projection canonique. 
On note $\phi _u$ l'isomorphisme composé:
\begin{gather}
\notag
\phi _u \colon
u ^{!} (\E _{(Y', X', \PP', \QQ')}) =
u ^{!}\circ (b, a) _{(Y', X', \PP', \QQ')+}  
 (\widetilde{\E} _{(\widetilde{Y} \times _Y Y',\widetilde{X} ', \widetilde{\PP} ',\widetilde{\QQ} ')} )
\underset{\ref{iso-chgt-base-cadre}}{\riso}
(b, a) _{(Y'', X'', \PP'', \QQ'')+} \circ 
\widetilde{u} ^!  (\widetilde{\E} _{(\widetilde{Y} \times _Y Y',\widetilde{X}', \widetilde{\PP} ',\widetilde{\QQ} ')} )
\\
\underset{\widetilde{\phi} _{\widetilde{u}}}{\riso}
(b, a) _{(Y'', X'', \PP'', \QQ'')+} (\widetilde{\E} _{(\widetilde{Y} \times _Y Y'',\widetilde{X} '  \times _{X'} X'',  \widetilde{\PP} '\times _{\PP '} \PP'' ,\widetilde{\QQ} '\times _{\QQ '} \QQ'' )} )
=\E _{(Y'', X'', \PP'', \QQ'')}.
\end{gather}
Le foncteur image directe par $(b,a)$, noté
$(b, a) _+ \colon F\text{-}D ^\mathrm{b} _\mathrm{surcoh} (\D ^\dag _{(\widetilde{Y} ,\widetilde{X})/K})
\to F\text{-}D ^\mathrm{b} _\mathrm{surcoh} (\D ^\dag _{(Y,X)/K})$, 
est défini en posant 
$$(b, a) _+  (\widetilde{\E} _{(\widetilde{Y} ',\widetilde{X}', \widetilde{\PP} ',\widetilde{\QQ} ')} , \widetilde{\phi} _u ) :=
(u ^{!} (\E _{(Y', X', \PP', \QQ')}), \phi _u).
$$
\end{vide}

\begin{vide}
Soit $(b,a) \colon (Y', X' ) \to (Y,X)$ 
et
$(b',a') \colon (Y'', X'' ) \to (Y',X')$
deux morphismes (resp. strictement) réalisables de couples.
Alors $(b\circ b',a\circ a')\colon \colon (Y'', X'' ) \to (Y,X)$
est un morphisme (resp. strictement) réalisable de couples. 
De plus, 
on bénéficie de l'isomorphisme canonique de transitivité:
\begin{equation}
\label{imdirsurcoh-trans}
(b,a) _+ \circ (b',a') _+
\riso 
(b\circ b',a\circ a') _+ .
\end{equation}

\end{vide}

\begin{prop}
\label{ind-theo-iminvextsurcoh}
Soit $(b,a) \colon (Y', X' ) \to (Y,X)$ un morphisme de couples avec $a$ propre et $b$ un isomorphisme. 
Alors 
les foncteurs $(b,a) ^{!} $ et $(b,a) _{+} $
induisent des équivalences canoniquement quasi-inverses entre 
$F\text{-}D ^\mathrm{b} _\mathrm{surcoh} (\D ^\dag _{(Y,X)/K})$
et
$F\text{-}D ^\mathrm{b} _\mathrm{surcoh} (\D ^\dag _{(Y',X')/K})$.
\end{prop}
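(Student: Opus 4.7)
Comme $b$ est un isomorphisme, il est en particulier quasi-projectif, et par \ref{theo-iminvextsurcoh} le morphisme $(b,a)$ est r\'ealisable, de sorte que le foncteur image directe $(b,a)_{+}$ est bien d\'efini par \ref{defi-ba+}. Le plan est alors de construire les deux familles d'isomorphismes canoniques
$(b,a)_{+} \circ (b,a)^{!} \riso \mathrm{id}$
et
$\mathrm{id} \riso (b,a)^{!} \circ (b,a)_{+}$
niveau par niveau en se ramenant au Lemme \ref{4.2.3.4}.

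Pour le premier isomorphisme, soit $\E = (\E_{(Y'',X'',\PP'',\QQ'')}, \phi_{u})$ un objet de $F\text{-}D^{\mathrm{b}}_{\mathrm{surcoh}}(\D^{\dag}_{(Y,X)/K})$. Pour tout cadre $(Y'',X'',\PP'',\QQ'')\in \mathfrak{Uni}(Y,X/K)$, je choisis une r\'ealisation $v := (b,a)_{(Y'',X'',\PP'',\QQ'')}\colon (Y'\times_{Y} Y'',\widetilde{X}'',\widetilde{\PP}'',\widetilde{\QQ}'')\to (Y'',X'',\PP'',\QQ'')$ avec $f''$ lisse et $a''\circ c$ propre. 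Comme $b$ est un isomorphisme, la projection canonique $Y'\times_{Y} Y''\to Y''$ l'est aussi, et $v$ rel\`eve donc (apr\`es identification) des hypoth\`eses du Lemme \ref{4.2.3.4}. La composition de l'isomorphisme structurel $v_{+}(\phi_{v})$ avec l'isomorphisme $v_{+}\circ v^{!}\riso \mathrm{id}$ fourni par \ref{4.2.3.4} donnera alors l'isomorphisme cherch\'e au niveau $(Y'',X'',\PP'',\QQ'')$.

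Pour le second isomorphisme, soit $\widetilde{\E}\in F\text{-}D^{\mathrm{b}}_{\mathrm{surcoh}}(\D^{\dag}_{(Y',X')/K})$ et $(Y''',X''',\PP''',\QQ''')\in \mathfrak{Uni}(Y',X'/K)$ de morphisme structurel $(\beta,\alpha)\colon (Y''',X''')\to (Y',X')$. J'exploiterai la libert\'e du choix de r\'ealisation rappel\'ee dans \ref{defi-ba+} en prenant la r\'ealisation \emph{triviale} $\widetilde{X}''' := X'''$ avec $c := (\alpha,\mathrm{id}_{X'''})\colon X'''\hookrightarrow X'\times_{X} X'''$ (qui est bien une immersion ferm\'ee puisque $X'/X$ est s\'epar\'e), $\widetilde{\PP}''' := \PP'''$ et $\widetilde{\QQ}''' := \QQ'''$. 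Avec ce choix, $(b,a)_{(Y''',X''',\PP''',\QQ''')}$ est l'identit\'e du cadre $(Y''',X''',\PP''',\QQ''')$, et la valeur de $(b,a)^{!}\circ (b,a)_{+}(\widetilde{\E})$ en ce niveau co\"incide tautologiquement avec $\widetilde{\E}_{(Y''',X''',\PP''',\QQ''')}$.

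Le point d\'elicat consistera en la v\'erification que ces familles d'isomorphismes niveau par niveau sont compatibles aux donn\'ees de recollement $\phi_{u}$ et $\widetilde{\phi}_{u}$, de sorte qu'elles d\'efinissent effectivement des morphismes dans les cat\'egories de complexes de type surcoh\'erent et induisent ainsi les \'equivalences annonc\'ees. Cette v\'erification se traitera par un diagramme analogue au diagramme \ref{eqcat-YXvsP-diag-dcom} de la preuve de \ref{eqcat-YXvsP}, en invoquant d'une part la transitivit\'e des isomorphismes de changement de base \ref{iso-chgt-base-cadre}, d'autre part la fonctorialit\'e des isomorphismes canoniques de \ref{4.2.3.4}.
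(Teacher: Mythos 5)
Your proposal is correct in outline but takes a genuinely different route from the paper. You construct the unit and counit directly: the counit $(b,a)_+\circ(b,a)^!\riso\mathrm{id}$ is obtained level by level from the structural isomorphism $\phi_v$ of the object combined with $v_+\circ v^!\riso\mathrm{id}$ from Lemma \ref{4.2.3.4} (legitimate, since $b$ being an isomorphism makes the realization morphism $v$ satisfy the hypotheses of that lemma after the identification $Y'\times_Y Y''\cong Y''$), and the unit is obtained tautologically by choosing the trivial realization (the graph of $\alpha$ gives a closed immersion, hence a proper $c$, so this is an allowable choice, and independence of the choice is guaranteed in \ref{defi-ba+}). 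The paper never constructs a unit or counit for $(b,a)$ itself: it first reduces to $b=\mathrm{id}$ via the factorization $(b,a)=(\mathrm{id},a)\circ(b,\mathrm{id})$, then invokes the Gruson--Raynaud lemma to produce $(\mathrm{id},c)\colon(Y',X'')\to(Y',X')$ with $c$ and $a\circ c$ projective, writes $(b,a)_+=(b,a\circ c)_+\circ(\mathrm{id},c)^!$, and obtains the equivalence formally as a composition of the two equivalences furnished by \ref{4.2.3.4} in the projective (strictly realizable) case, identifying the quasi-inverse with $(b,a)^!$ via the transitivity isomorphism \ref{iminvextsurcoh-trans}. Both arguments rest on \ref{4.2.3.4} as the key input; the paper's detour through the projective case buys it the equivalence without ever touching the gluing data of individual objects, whereas your direct construction must still verify that the level-wise isomorphisms commute with the $\phi_u$ --- the step you defer to a diagram analogous to \ref{eqcat-YXvsP-diag-dcom}. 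That verification is real work, but it is of the same nature as what the paper itself leaves implicit when globalizing \ref{4.2.3.4} from frames to the glued categories, so your argument is complete at the paper's own level of detail.
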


\begin{proof}
Grâce au lemme de Chow précis de Gruson-Rayaud (voir \cite[5.7.14]{Gruson_Raynaud-platprojectif}),
il existe un morphisme de couples
$(id,c)\colon (Y', X'') \to (Y', X ') $ tel que $c$ soit projectif et $a \circ c$ soit projectif.
Par construction du foncteur $(b, a) _+$ décrite dans \ref{defi-ba+}, on obtient alors
$(b, a) _+ =(b, a \circ c)  _{+} \circ   (id, c) ^{!}$.
Quitte à décomposer $(b,a)$ en 
$(id, a) \circ (b,id)$, comme le cas où $a=id$ est évident, on se ramène à supposer $b=id$.
Il résulte alors du lemme \ref{4.2.3.4} que les foncteurs 
$(id,c) _+$ et $(id,c) ^{!}$ 
(resp. $(id, a \circ c)  _{+}$ et $(id, a \circ c)  ^{!}$) 
induisent alors des équivalences quasi-inverses 
entre les catégories 
$F\text{-}D ^\mathrm{b} _\mathrm{surcoh} (\D ^\dag _{(Y,X'')/K})$
et
$F\text{-}D ^\mathrm{b} _\mathrm{surcoh} (\D ^\dag _{(Y,X')/K})$
(resp. 
$F\text{-}D ^\mathrm{b} _\mathrm{surcoh} (\D ^\dag _{(Y,X'')/K})$
et
$F\text{-}D ^\mathrm{b} _\mathrm{surcoh} (\D ^\dag _{(Y,X)/K})$).
D'où le résultat.
\end{proof}

\section{Indépendance par rapport à la compactification partielle}

\begin{defi}
\label{def-Sc^-1}
Notons $S _c \subset \mathrm{Fl}(\mathfrak{Cpl})$ (voir les notations de \ref{defi-couples})
le système multiplicatif à gauche des morphismes de la forme $(id,a) \colon (Y, X' ) \to (Y,X)$ avec $a$ propre. 
D'après \cite[7.1.16]{Kashiwara-schapira-book},
on dispose alors de la catégorie localisée $S _c  ^{-1} \mathfrak{Cpl}$.
Un morphisme $u \colon (Y', X' ) \to (Y,X)$ de $S _c  ^{-1} \mathfrak{Cpl}$
est dit {\og complet\fg} s'il existe un représentant de $u$ de la forme
$ (Y', X' ) \underset{(id, c)}{\longleftarrow} (Y',Z' ) \underset{(b,a)}{\longrightarrow} (Y,X)$ 
avec $a$ propre. 
\end{defi}

\begin{rema}
\label{rel-eq-S-1Hom}
\begin{itemize}
\item Soient $(Y', X' ) $ et $(Y, X) $ deux objets de $\mathfrak{Cpl}$.
On remarque que si $u$ et $v $ sont deux morphismes de $\mathfrak{Cpl}$ tels que 
$u, v\circ u \in S _c$ alors $v \in S _c$. 
On en déduit que 
 $\mathrm{Hom} _{S _c  ^{-1} \mathfrak{Cpl}} ( (Y', X' ) ,~(Y, X) )$
est la famille des couples de flèches de la forme 
$ (Y', X' ) \underset{(id, c)}{\longleftarrow} (Y',Z') \underset{(b,a)}{\longrightarrow} (Y,X)$ 
avec $c$ un morphisme propre
quotienté par la relation d'équivalence suivante: 
deux telles paires de morphismes
$ (Y', X' ) \underset{(id, c)}{\longleftarrow} (Y',Z' _1) \underset{(b,a)}{\longrightarrow} (Y,X)$ 
et
$ (Y', X' ) \underset{(id, c')}{\longleftarrow} (Y',Z' _2) \underset{(b',a')}{\longrightarrow} (Y,X)$ 
sont équivalentes s'il existe deux morphismes de $S _c$ de la forme
$(id, c _1)\colon (Y',Z') \to (Y',Z' _1)$
et
$(id, c _2)\colon (Y',Z') \to (Y',Z' _2)$
tels que 
$(id, c ) \circ (id, c _1)= (id, c ') \circ (id, c _2)$
et
$(b, a ) \circ (id, c _1)= (b', a ') \circ (id, c _2)$.

\item Pour vérifier que cette relation est bien une relation d'équivalence, 
on peut se passer 
de l'axiome S4' de \cite[7.1.5]{Kashiwara-schapira-book}
en la remplaçant par 
la propriété que si $(id, c _1)\colon (Y',Z' _1) \to (Y',Z')$
et
$(id, c _2)\colon (Y',Z'_2) \to (Y',Z' )$ sont deux morphismes de 
$S _c$,
alors on dispose du produit fibré $(Y',Z' _1 \times _Z Z' _2) $ et des projections
$(id, p _1) \colon (Y',Z' _1 \times _Z Z' _2) \to (Y',Z' _1)$ et $(id, p _2) \colon  (Y',Z' _1 \times _Z Z' _2) \to (Y',Z' _2)$ 
qui sont aussi des morphismes de $S _c$.

\item La composition d'un morphisme
de $S _c  ^{-1} \mathfrak{Cpl}$ 
représenté par 
$(Y', X' ) \underset{(id, c)}{\longleftarrow} (Y',Z') \underset{(b,a)}{\longrightarrow} (Y,X)$
avec un second représenté par 
$ (Y'', X'' ) \underset{(id, c')}{\longleftarrow} (Y'',Z'') \underset{(b',a')}{\longrightarrow} (Y',X')$
est le morphisme 
représenté par 
$ (Y'', X'' ) \underset{(id, c'\circ p )}{\longleftarrow} (Y'',Z'' \times _{X'} Z' ) \underset{(b \circ b',a\circ q)}{\longrightarrow} (Y,X)$,
où 
$p \colon Z'' \times _{X'} Z' \to Z''$
et 
$q \colon Z'' \times _{X'} Z' \to Z'$ sont les morphismes canoniques.
De même, pour vérifier que ceci est défini, on peut éviter d'avoir recours 
à l'axiome S4' de \cite[7.1.5]{Kashiwara-schapira-book}. 

\end{itemize}

\end{rema}

\begin{vide}
\label{vide-4.6}
\begin{enumerate}
\item  Soit $u \colon (Y', X' ) \to (Y,X)$ un morphisme de $S _c  ^{-1} \mathfrak{Cpl}$.
On remarque que grâce au lemme de Chow précis de Gruson-Rayaud (voir \cite[5.7.14]{Gruson_Raynaud-platprojectif}), 
on peut supposer qu'un représentant de $u$ soit de la forme 
$ (Y', X' ) \underset{(id, c)}{\longleftarrow} (Y',Z') \underset{(b,a)}{\longrightarrow} (Y,X)$ 
 avec $c$ un morphisme projectif. 
De plus, si $u \colon (Y', X' ) \to (Y,X)$ est un isomorphisme de $S _c ^{-1} \mathfrak{Cpl}$, 
on peut choisir un tel représentant 
avec $a$ et $c$ projectifs, $b$ un isomorphisme de $k$-variétés.

\item On dispose dans $S _c  ^{-1} \mathfrak{Cpl}$ de produits fibrés.
Plus précisément, soient $u \colon (Y', X' ) \to (Y,X)$ et 
$v \colon (Y'', X'' ) \to (Y,X)$ deux morphismes de $S _c  ^{-1} \mathfrak{Cpl}$.
Choisissons 
$ (Y', X' ) \underset{(id, c)}{\longleftarrow} (Y',Z') \underset{(b,a)}{\longrightarrow} (Y,X)$ 
et
$ (Y'', X'' ) \underset{(id, c')}{\longleftarrow} (Y'',Z'') \underset{(b',a')}{\longrightarrow} (Y,X)$ 
des représentants de $u$ et $v$.
Alors le produit fibré 
$(Y'', X'' ) \times _{(Y, X )} (Y', X' )$ dans $S _c  ^{-1} \mathfrak{Cpl}$
est égal, à isomorphisme de $S _c  ^{-1} \mathfrak{Cpl}$ canonique près,
à $(Y''\times _{Y} Y', Z''\times _{X} Z ' )$.

\end{enumerate}

\end{vide}

\begin{defi}
 \label{uni-sharp}

\begin{itemize}
\item 
On note $T _c \subset \mathrm{Fl}(\mathfrak{Cad}) $  (voir \ref{defi-couples})
le système multiplicatif à gauche des flèches de la forme $(id,a, g,f) \colon (Y, X', \PP', \QQ' ) \to (Y,X, \PP, \QQ)$ avec $a$ propre 
(pour valider l'axiome S4' de \cite[7.1.5]{Kashiwara-schapira-book}, on utilise 
\cite[5.7.14]{Gruson_Raynaud-platprojectif} et \ref{theo-iminvextsurcoh}).
On dispose alors de 
la catégorie $ T _c ^{-1}\mathfrak{Cad}$.  
Le foncteur canonique de restriction
$\mathfrak{Cad} \to \mathfrak{Cpl} $ défini par 
$ (Y,X, \PP, \QQ)\mapsto  (Y,X)$ et $(b,a, g,f)\mapsto (b,a)$ 
se factorise en le foncteur 
$\mathrm{r}\colon 
T _c ^{-1}\mathfrak{Cad} \to S  _c ^{-1}\mathfrak{Cpl} $.
 
\item Soit $(Y,X)/K$ un couple.
On note $U _c ^{-1}\mathfrak{Uni} (Y,X/K) $ 
la catégorie dont les objets sont les pairs 
$((Y', X', \PP', \QQ'), ~u)$ où 
$(Y', X', \PP', \QQ')$ est 
un cadre localement propre 
et où $u \colon  (Y', X') \to (Y, X)$ est un morphisme de $S _c ^{-1} \mathfrak{Cpl}$. 
On pourra noter un tel objet par abus de notations 
$u\colon (Y', X', \PP', \QQ') \to  (Y, X)$
 ou encore
$(Y', X', \PP', \QQ') $.
Un morphisme $(Y'', X'', \PP'', \QQ''), ~u') \to (Y', X', \PP', \QQ'), ~u)$ 
de $U _c ^{-1}\mathfrak{Uni} (Y,X/K) $ 
est un morphisme de $ T _c ^{-1}\mathfrak{Cad}$ de la forme 
$\phi \colon (Y'', X'', \PP'', \QQ'')\to (Y', X', \PP', \QQ')$ au-dessus de $(Y,X)$,
i.e. tel que $u \circ \mathrm{r} (\phi) = u '$. 
\end{itemize}
\end{defi}

\begin{vide}
\label{im-inv-S-1}
Soit $u \colon (Y', X', \PP', \QQ' ) \to (Y,X, \PP, \QQ)$ un morphisme de $ T _c ^{-1}\mathfrak{Cad}$.
On définit canoniquement le fonceur $u ^{!}$ de la manière suivante. 
Si $(Y', X', \PP', \QQ' ) \underset{s}{\longleftarrow} (Y',X'', \PP'', \QQ'') \underset{v}{\longrightarrow} (Y,X, \PP, \QQ)$
est un représentant de $u$, on pose
$u ^{!} := s _{+} \circ v ^{!}$. 
Grâce à \ref{ind-theo-iminvextsurcoh},
on vérifie que cela ne dépend pas, à isomorphisme canonique près du choix du représentant de $u$.
De plus,
si $u '\colon (Y',X'', \PP'', \QQ'')  \to (Y', X', \PP', \QQ' )$ est un morphisme de $ T _c ^{-1}\mathfrak{Cad}$, 
on dipose de l'isomorphisme canonique 
$u ^{\prime !} \circ u ^{!}\riso (u \circ u ') ^{!} $.
\end{vide}

\begin{defi}
\label{defi-surcoh-cadre-indX}
Soit $(Y,X)/K$ un couple.
En remplaçant respectivement dans la définition
\ref{defi-surcoh-cadre}
les catégories $ \mathfrak{Cpl}$ par $S _c ^{-1} \mathfrak{Cpl}$
et $\mathfrak{Uni} (Y,X/K) $ 
par 
$U _c ^{-1}\mathfrak{Uni} (Y,X/K) $,
grâce aussi à \ref{im-inv-S-1},
on définit la catégorie $F\text{-}\widetilde{D} ^\mathrm{b} _\mathrm{surcoh} (\D ^\dag _{(Y,X)/K})$ 
des complexes de type surcohérent sur $(Y,X)/K$.
\end{defi}

\begin{prop}
\label{prop-surcoh-indX}
Soit $(Y,X)/K$ un couple.
Le foncteur de restriction
$F\text{-}\widetilde{D} ^\mathrm{b} _\mathrm{surcoh} (\D ^\dag _{(Y,X)/K})
\to 
F\text{-}D ^\mathrm{b} _\mathrm{surcoh} (\D ^\dag _{(Y,X)/K})$
est une équivalence de catégories.
\end{prop}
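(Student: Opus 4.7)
The plan is to build a quasi-inverse $\beta$ to the restriction functor $\alpha$, following the template of Proposition~\ref{eqcat-YXvsP}. The crucial observation is that every object of $U _c ^{-1}\mathfrak{Uni} (Y,X/K)$ is canonically isomorphic in that category to an object of $\mathfrak{Uni} (Y,X/K)$: given $((Y', X', \PP', \QQ'), u)$ with a representative $(Y', X') \overset{(id,c)}{\longleftarrow} (Y', Z') \overset{(b,a)}{\longrightarrow} (Y, X)$ of $u$ with $c$ proper, one may closed-embed $Z'$ into a smooth separated formal $\V$-scheme $\widetilde{\PP}'$ endowed with a morphism to $\PP'$ (for instance, factor the composite $Z' \to X' \hookrightarrow \PP'$ through a closed embedding into $\PP' \times \widehat{\P} ^N _{\V}$), then open-embed $\widetilde{\PP}'$ into a proper $\widetilde{\QQ}'$. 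This produces a cadre localement propre $(Y', Z', \widetilde{\PP}', \widetilde{\QQ}')$, an object of $\mathfrak{Uni} (Y,X/K)$ via the genuine morphism $(b,a)$, together with a morphism $\theta = (id, c, g, f): (Y', Z', \widetilde{\PP}', \widetilde{\QQ}') \to (Y', X', \PP', \QQ')$ of $\mathfrak{Cad}$ lying in $T_c$, hence invertible in $T_c ^{-1}\mathfrak{Cad}$.

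Given $\E = (\E _{(Y', X', \PP', \QQ')}, \phi _u) \in F\text{-}D ^\mathrm{b} _\mathrm{surcoh} (\D ^\dag _{(Y,X)/K})$, I set
$$\widetilde{\E} _{((Y', X', \PP', \QQ'), u)} := \theta _+ \E _{(Y', Z', \widetilde{\PP}', \widetilde{\QQ}')}.$$
Well-definedness up to canonical isomorphism reduces to independence of the choice of representative of $u$ and of the auxiliary frame; any two choices are dominated by a third via fibered products, and Proposition~\ref{ind-theo-iminvextsurcoh} — asserting that $\theta _+$ and $\theta ^!$ are quasi-inverse equivalences when $\theta = (id, c, g, f)$ with $c$ proper — furnishes the requisite comparison. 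For a morphism $\psi: ((Y'', X'', \PP'', \QQ''), u') \to ((Y', X', \PP', \QQ'), u)$ of $U _c ^{-1}\mathfrak{Uni} (Y,X/K)$, the transition isomorphism $\widetilde{\phi} _\psi$ is constructed by combining the extended functor $\psi ^!$ of \ref{im-inv-S-1}, the base-change Lemma~\ref{cgtdebase}, and the original transitions $\phi _u$. The cocycle condition for $\widetilde{\phi}$ then reduces to the cocycle condition for $\phi$ and the transitivity $u ^{\prime !} \circ u ^! \riso (u \circ u') ^!$.

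When $u$ is already a genuine morphism of $\mathfrak{Cpl}$, one may take $Z' = X'$ and $\theta = id$, giving $\widetilde{\E} _{((Y', X', \PP', \QQ'), u)} = \E _{(Y', X', \PP', \QQ')}$, which shows $\alpha \circ \beta \riso id$. Conversely, starting from $\widetilde{\E} \in F\text{-}\widetilde{D} ^\mathrm{b} _\mathrm{surcoh} (\D ^\dag _{(Y,X)/K})$, the invertibility of the image of $\theta$ in $T_c ^{-1}\mathfrak{Cad}$ yields through its transition isomorphism $\widetilde{\E} _{((Y', X', \PP', \QQ'), u)} \riso \theta _+ \widetilde{\E} _{(Y', Z', \widetilde{\PP}', \widetilde{\QQ}')}$, giving $\beta \circ \alpha \riso id$. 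The main technical obstacle is the well-definedness step: reconciling two frames $(\widetilde{\PP}', \widetilde{\QQ}')$ and two representatives of $u$ demands a sequence of diagram chases in the style of \ref{eqcat-YXvsP}, combining \ref{ind-theo-iminvextsurcoh}, \ref{cgtdebase} and the transitivity \ref{imdirsurcoh-trans} of direct images; once this coherence is in place, the remaining verifications for $\widetilde{\phi}$ are analogous diagram chases.
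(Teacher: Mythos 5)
Votre démonstration suit pour l'essentiel la même stratégie que celle du texte : on construit le quasi-inverse en choisissant, grâce à Gruson--Raynaud, un représentant $(Y',X') \leftarrow (Y',Z') \rightarrow (Y,X)$ de $u$ avec $Z'\to X'$ projectif, on réalise $(Y',Z')$ dans un cadre localement propre au-dessus de $\PP'$ (c'est \ref{theo-iminvextsurcoh}.1), puis on transporte la valeur par l'image directe le long du morphisme de $T_c$ ainsi obtenu, l'indépendance des choix et les isomorphismes de transition étant assurés par \ref{ind-theo-iminvextsurcoh}, \ref{4.2.3.4} et \ref{cgtdebase}. Votre rédaction est simplement plus détaillée que celle du papier (qui se contente d'esquisser la construction), et il convient seulement de préciser dès le départ que l'on prend $c$ \emph{projectif} (et non seulement propre) pour que le plongement fermé de $Z'$ dans $\PP'\times\widehat{\P}^N_{\V}$ existe.
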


\begin{proof}
Construisons canoniquement un foncteur quasi-inverse.
Soit $(\E _{(Y', X', \PP', \QQ')} , \phi _u )$ un objet de la catégorie
$F\text{-}D ^\mathrm{b} _\mathrm{surcoh} (\D ^\dag _{(Y,X)/K})$.
Soit $((Y',  \widetilde{X}', \widetilde{\PP}', \widetilde{\QQ}') , u)$ un objet de $U _c ^{-1}\mathfrak{Uni} (Y,X/K) $.
Choisissons un représentant de $u$ de la forme
$(Y', \widetilde{X}') \underset{(id,c)}{\longleftarrow} (Y',  X') 
\underset{u}{\longrightarrow}
 (Y,X)$ avec $c$ projectif. 
 D'après \ref{theo-iminvextsurcoh}.1,
 il existe un morphisme de $\mathfrak{Cad}$
 de la forme
 $(id, c, g,f)\colon 
(Y',  \widetilde{X}', \widetilde{\PP}', \widetilde{\QQ}')
\to 
(Y', X', \PP', \QQ')$.
On pose alors 
$\E _{(Y',  \widetilde{X}', \widetilde{\PP}', \widetilde{\QQ}')}:= 
(id, c, g,f) _{+} (\E _{(Y', X', \PP', \QQ'))}$.
 On vérifie alors que 
l'on obtient bien un objet de 
$F\text{-}\widetilde{D} ^\mathrm{b} _\mathrm{surcoh} (\D ^\dag _{(Y,X)/K})$.
\end{proof}

\begin{defi}
\label{def-real-S-1}
Soit $u \colon (\widetilde{Y},\widetilde{X}) \to (Y,X)$ un morphisme complet de $S _c ^{-1} \mathfrak{Cpl}$.
On dit que $u$ est {\og réalisable\fg}  si
pour tout objet $(Y', X', \PP', \QQ')$ de $U _c ^{-1}\mathfrak{Uni} (Y,X/K)$
il existe alors un isomorphisme de
$S  _c ^{-1} \mathfrak{Cpl}$
de la forme
$
(\widetilde{Y} ',\widetilde{X} ')
\riso 
(\widetilde{Y} \times _Y Y',\widetilde{X} \times _X X')$
et
un objet de 
$\mathfrak{Cad}$
de la forme 
$(\widetilde{Y} ',\widetilde{X}', \widetilde{\PP} ',\widetilde{\QQ} ')$.

\end{defi}

\begin{lemm}
Soit
$u \colon (Y', X' ) \to (Y,X)$
un morphisme de $ \mathfrak{Cpl}$.
Le morphisme $u$ est 
réalisable comme morphisme de $ \mathfrak{Cpl}$
si et seulement si 
$u$ est réalisable comme morphisme de $S _c ^{-1} \mathfrak{Cpl}$.
\end{lemm}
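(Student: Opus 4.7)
The plan is to prove the two implications separately, both resting on a common geometric lemma that I would establish first: given a cadre localement propre $(Y, X, \PP, \QQ)$ and a projective morphism $c \colon Z \to X$ making $(Y, Z)$ a couple with $c|_Y = id_Y$, one can promote $(Y, Z)$ to a cadre localement propre. For this, projectivity of $c$ will provide a closed immersion $Z \hookrightarrow \P^n _X$ for some $n$; composing with the closed immersion $X \hookrightarrow \PP$ will yield a closed immersion $Z \hookrightarrow \PP \times _\V \widehat{\P}^n _\V$. Since $\PP \times \widehat{\P}^n$ is smooth and separated while $\QQ \times \widehat{\P}^n$ is smooth and proper, $(Y, Z, \PP \times \widehat{\P}^n, \QQ \times \widehat{\P}^n)$ will be the desired cadre localement propre.

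For the direction $(\Rightarrow)$, I would fix an object $(Y_0, X_0, \PP_0, \QQ_0)$ of $U _c ^{-1}\mathfrak{Uni} (Y,X/K)$ with structural $S _c ^{-1}$-morphism $v \colon (Y_0, X_0) \to (Y, X)$, and by \ref{vide-4.6}.1 choose a representative of $v$ of the form $(Y_0, X_0) \underset{(id,c)}{\longleftarrow} (Y_0, Z_0) \underset{(b_0,a_0)}{\longrightarrow} (Y,X)$ with $c$ projective. The geometric lemma above applied to $c$ promotes $(Y_0, Z_0)$ to a cadre localement propre which, via $(b_0, a_0)$, becomes an object of $\mathfrak{Uni}(Y,X/K)$. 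Invoking the $\mathfrak{Cpl}$-realisability of $u$ at this new object will yield a morphism $(id, c'') \colon (Y' \times_Y Y_0, \widetilde{X}'') \to (Y' \times_Y Y_0, X' \times_X Z_0)$ with $c''$ proper, together with a cadre localement propre on the source. Since $(id, c'')$ lies in $S_c$, it becomes an isomorphism in $S _c ^{-1}\mathfrak{Cpl}$, and its target coincides by \ref{vide-4.6}.2 with the $S _c ^{-1}$-fiber product denoted $(Y' \times_Y Y_0, X' \times_X X_0)$ in \ref{def-real-S-1}, thus witnessing the $S _c ^{-1}$-realisability of $u$ at $(Y_0, X_0, \PP_0, \QQ_0)$.

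For the direction $(\Leftarrow)$, I would fix an object $(Y_0, X_0, \PP_0, \QQ_0)$ of $\mathfrak{Uni}(Y, X/K)$, viewed inside $U _c ^{-1}\mathfrak{Uni} (Y,X/K)$ via its plain structural morphism. The $S _c ^{-1}$-realisability of $u$ then provides an isomorphism $\phi \colon (Y^\sim, X^\sim) \riso (Y' \times_Y Y_0, X' \times_X X_0)$ in $S _c ^{-1}\mathfrak{Cpl}$ together with a cadre localement propre $(Y^\sim, X^\sim, \PP^\sim, \QQ^\sim)$. By \ref{vide-4.6}.1, I can represent $\phi$ by $(Y^\sim, X^\sim) \underset{(id,c_1)}{\longleftarrow} (Y^\sim, Z) \underset{(b,a_1)}{\longrightarrow} (Y' \times_Y Y_0, X' \times_X X_0)$ with $c_1, a_1$ projective and $b$ an isomorphism of $k$-varieties. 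Using $b$ to identify $Y^\sim$ with $Y' \times _Y Y_0$, the right leg takes the form $(id, a_1)$ with $a_1$ proper, and the geometric lemma applied to $c_1$ produces a cadre localement propre $(Y' \times _Y Y_0, Z, \PP^\sim \times \widehat{\P}^n, \QQ^\sim \times \widehat{\P}^n)$. Together these data realise $u$ at $(Y_0, X_0, \PP_0, \QQ_0)$ in the sense of \ref{defi-realis}. The main obstacle is precisely the geometric lemma: projectivity---rather than mere properness---is what furnishes an explicit closed immersion into a projective space that can be lifted to a closed immersion into a smooth formal $\V$-scheme, and this is what motivates the systematic use of \ref{vide-4.6}.1 in each direction.
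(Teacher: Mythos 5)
Your proof is correct and takes essentially the same route as the paper, whose entire proof is the one-line citation \og Cela r\'esulte aussit\^ot de \ref{theo-iminvextsurcoh}.1, \ref{vide-4.6}.1 et de \ref{vide-4.6}.2\fg: your preliminary \emph{geometric lemma} is precisely the mechanism behind \ref{theo-iminvextsurcoh}.1 (a projective modification of the closed part of a cadre localement propre can again be fitted into one via $\PP \times \widehat{\P}^n$, $\QQ \times \widehat{\P}^n$), and the two implications you spell out are exactly the intended unwinding using the projective representatives of \ref{vide-4.6}.1 and the description of fibre products in $S_c^{-1}\mathfrak{Cpl}$ from \ref{vide-4.6}.2.
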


\begin{proof}
Cela résulte aussitôt de \ref{theo-iminvextsurcoh}.1, 
\ref{vide-4.6}.1 et de \ref{vide-4.6}.2.
\end{proof}

\begin{defi}
Soit $u \colon (Y', X' ) \to (Y,X)$ un morphisme de $S _c  ^{-1} \mathfrak{Cpl}$.

\begin{itemize}
\item On dispose du foncteur de restriction
$u ^{!}\colon 
F\text{-}\widetilde{D} ^\mathrm{b} _\mathrm{surcoh} (\D ^\dag _{(Y,X)/K})
\to
F\text{-}\widetilde{D} ^\mathrm{b} _\mathrm{surcoh} (\D ^\dag _{(Y',X')/K})$
dit image inverse extraordinaire par $u$.

\item Si $u$ est réalisable, 
on définit le foncteur 
$u _{+}\colon F\text{-}\widetilde{D} ^\mathrm{b} _\mathrm{surcoh} (\D ^\dag _{(Y',X')/K})
\to F\text{-}\widetilde{D} ^\mathrm{b} _\mathrm{surcoh} (\D ^\dag _{(Y,X)/K})$
image directe par $u$ 
de manière identique à \ref{defi-ba+}.
\end{itemize}

De plus, on dispose du produit tensoriel sur 
$ F\text{-}\widetilde{D} ^\mathrm{b} _\mathrm{surcoh} (\D ^\dag _{(Y,X)/K})$.

\end{defi}

\section{Le foncteur dual, catégorie de type dual surcohérent}

\begin{defi}
\label{def-surcoh-couple*}
Soit $(Y,X)/K$ un couple.
On définit la catégorie 
$F\text{-}D ^\mathrm{b} _\mathrm{surcoh} (\D ^\dag _{(Y,X)/K}) ^{*}$ 
des complexes de type dual surcohérent sur $(Y,X)/K$
de la manière suivante:
\begin{itemize}
\item Un objet est la donnée 
\begin{itemize}
\item  d'une famille d'objets $\E _{(Y', X', \PP', \QQ')} $ de 
$F\text{-}D ^\mathrm{b} _\mathrm{surcoh}  (Y', X', \PP',\QQ'/K)$, 
où $(Y', X', \PP', \QQ')$ parcourt les objets de $\mathfrak{Uni} (Y,X/K)$ ;
\item pour toute flèche
$u\colon  (Y'', X'', \PP'', \QQ'') \to(Y', X', \PP', \QQ')$ de $\mathfrak{Uni} (Y,X/K)$,
d'un isomorphisme 
$$\phi _u \colon u ^{+} (\E _{(Y', X', \PP', \QQ')} ) \riso \E _{(Y'', X'', \PP'', \QQ'')} $$ 
dans $F\text{-}D ^\mathrm{b} _\mathrm{surcoh}  (Y'', X'', \PP'',\QQ''/K)$,
ces isomorphismes vérifiant la condition de cocycle : pour tous morphismes 
$u\colon  (Y'', X'', \PP'', \QQ'') \to(Y', X', \PP', \QQ')$ et
$v\colon  (Y''', X''', \PP''', \QQ''') \to(Y'', X'', \PP'', \QQ'')$
de $\mathfrak{Uni} (Y,X/K)$,
le diagramme
\begin{equation}
\xymatrix @R=0,3cm{
 {v ^{+} \circ u ^{+} (\E _{(Y', X', \PP', \QQ')} ) } 
 \ar[r] ^-{v ^{+} (\phi _u)}
 \ar[d] ^-{\sim}
 & 
 {v ^{+} ( \E _{(Y'', X'', \PP'', \QQ'')} )} 
  \ar[d] ^-{\phi _v}
 \\ 
 {(u\circ v) ^{+} (\E _{(Y', X', \PP', \QQ')} ) } 
  \ar[r] ^-{\phi _{u\circ v}}
 & 
 {\E _{(Y''', X''', \PP''', \QQ''')} } 
 }
\end{equation}

soit commutatif.
\end{itemize}
On notera de manière elliptique 
$(\E _{(Y', X', \PP', \QQ')} , \phi _u )$ une telle donnée.

\item Un morphisme $\alpha\colon (\E _{(Y', X', \PP', \QQ')} , \phi _u ) 
\to 
(\FF _{(Y', X', \PP', \QQ')} , \psi _u )$
de
$F\text{-}D ^\mathrm{b} _\mathrm{surcoh} (\D ^\dag _{(Y,X)/K})$ 
est la donnée d'une famille 
de morphismes
$\alpha _{(Y', X', \PP', \QQ')}\colon \E _{(Y', X', \PP', \QQ')} \to \FF _{(Y', X', \PP', \QQ')} $
 de 
$F\text{-}D ^\mathrm{b} _\mathrm{surcoh}  (Y', X', \PP',\QQ'/K)$
telle que pour tout morphisme
$u\colon  (Y'', X'', \PP'', \QQ'') \to(Y', X', \PP', \QQ')$ 
de 
$\mathfrak{Uni} (Y,X/K)$ on ait 
$\psi _u \circ u ^{+}(\alpha _{(Y', X', \PP', \QQ')})
=
\alpha _{(Y'', X'', \PP'', \QQ'')} \circ \phi _u $.
\end{itemize}

\end{defi}

\begin{vide}
\label{dual-surcoh*}
Soit $(Y,X)/K$ un couple.
On dispose du foncteur 
$\DD _{(Y,X)/K}
\colon 
F\text{-}D ^\mathrm{b} _\mathrm{surcoh} (\D ^\dag _{(Y,X)/K})
\to 
F\text{-}D ^\mathrm{b} _\mathrm{surcoh} (\D ^\dag _{(Y,X)/K}) ^*$
défini par 
$(\E _{(Y', X', \PP', \QQ')} , \phi _u )\mapsto
(\DD _{Y', \PP'}(\E _{(Y', X', \PP', \QQ')}) , \psi _u )$, où 
$\psi _u$ est l'isomorphisme composé:
\begin{equation}
\psi _u \colon u ^{+} (\DD _{Y', \PP'} (\E _{(Y', X', \PP', \QQ')} ))
\riso
\DD _{Y'', \PP''} \circ u ^{!} (\E _{(Y', X', \PP', \QQ')} )
\underset{\DD _{Y'', \PP''} (\phi _u)}{\liso}
\DD _{Y'', \PP''} ( \E _{(Y'', X'', \PP'', \QQ'')}  ).
\end{equation}
De même, on dispose 
du foncteur 
$\DD _{(Y,X)/K}
\colon 
F\text{-}D ^\mathrm{b} _\mathrm{surcoh} (\D ^\dag _{(Y,X)/K}) ^*
\to 
F\text{-}D ^\mathrm{b} _\mathrm{surcoh} (\D ^\dag _{(Y,X)/K})$
et de l'isomorphisme canonique de bidualité 
$\DD _{(Y,X)/K} \circ \DD _{(Y,X)/K} \riso Id$.
\end{vide}

\begin{vide}
Soit $(Y,X)/K$ un couple.
On définit par dualité via \ref{dual-surcoh*}
les trois opérations cohomologiques duales sur les catégories de type duale surcohérent 
de celles sur les catégories de type surcohérente.

$\bullet$ Conformément aux notations de T. Abe (voir \cite[5.8]{Abe-Frob-Poincare-dual}), 
on notera le bifoncteur  produit tensoriel tordu (on pourrait aussi dire dualisé) de la manière suivante:
 $$-
\smash{\overset{\L}{\widetilde{\otimes}}}   ^{\dag}
_{\O  _{(Y, X)/K}}
-
\colon 
F\text{-}D ^\mathrm{b} _\mathrm{surcoh} (\D ^\dag _{(Y,X)/K}) ^{*}
\times 
F\text{-}D ^\mathrm{b} _\mathrm{surcoh} (\D ^\dag _{(Y,X)/K}) ^{*}
\to 
F\text{-}D ^\mathrm{b} _\mathrm{surcoh} (\D ^\dag _{(Y,X)/K})^{*}, $$ 
ce dernier étant défini en posant, 
pour tout $\E, \FF \in F\text{-}D ^\mathrm{b} _\mathrm{surcoh} (\D ^\dag _{(Y,X)/K}) ^{*}$, 
$$\E
\smash{\overset{\L}{\widetilde{\otimes}}}   ^{\dag}
_{\O  _{(Y, X)/K}}
\FF
:= 
\DD _{(Y,X)/K}  \left (\DD _{(Y,X)/K}  (\E)
\smash{\overset{\L}{\otimes}}   ^{\dag}
_{\O  _{(Y, X)/K}}
\DD _{(Y,X)/K}  (\FF)
\right ).$$

$\bullet$ 
Soit $u \colon (\widetilde{Y} ,\widetilde{X}) \to (Y,X)$ un morphisme réalisable de $ \mathfrak{Cpl}$.
Le foncteur image directe extraordinaire par $u$, noté
$u _! \colon F\text{-}D ^\mathrm{b} _\mathrm{surcoh} (\D ^\dag _{(\widetilde{Y} ,\widetilde{X})/K}) ^{*}
\to F\text{-}D ^\mathrm{b} _\mathrm{surcoh} (\D ^\dag _{(Y,X)/K})^{*}$, 
est défini en posant, pour tout $\widetilde{\E} \in F\text{-}D ^\mathrm{b} _\mathrm{surcoh} (\D ^\dag _{(\widetilde{Y} ,\widetilde{X})/K}) ^{*}$, 
$$u _!  (\widetilde{\E}) := 
\DD _{(Y,X)/K}\circ u _+ \circ \DD _{(\widetilde{Y} ,\widetilde{X})/K} (\widetilde{\E}).$$

$\bullet$ 
Soit $u \colon (Y', X' ) \to (Y,X)$ un morphisme de $ \mathfrak{Cpl}$.
On définit le foncteur image inverse en posant:
$u ^{+} \riso \DD _{(Y',X')/K}\circ u ^{!}  \circ \DD _{(Y,X)/K} $.
\end{vide}

\begin{vide}
De manière analogue à \ref{defi-surcoh-cadre-indX}, on définit
la catégorie 
$F\text{-}\widetilde{D} ^\mathrm{b} _\mathrm{surcoh} (\D ^\dag _{(Y,X)/K}) ^*$.
Toutes les propriétés de ce chapitre s'étendent naturellement à cette situation, 
i.e. il suffit de remplacer partout {\og $F\text{-}D ^\mathrm{b} _\mathrm{surcoh}$\fg} par {\og $F\text{-}\widetilde{D} ^\mathrm{b} _\mathrm{surcoh}$\fg}
et de rajouter des $S ^{-1} _c$.
\end{vide}

\section{Le formalisme des six opérations sur les catégories de complexes de type surholonome sur les couples}

\begin{defi}
\label{defi-surhol-couple}
Soit $(Y,X)/K$ un couple.
On définit la catégorie $F\text{-}D ^\mathrm{b} _\mathrm{surhol} (\D ^\dag _{(Y,X)/K})$ 
des complexes de type surholonome sur $(Y,X)/K$
de la manière suivante:
un objet est la donnée $(\E, \FF, \epsilon )$ où $\E \in F\text{-}D ^\mathrm{b} _\mathrm{surcoh} (\D ^\dag _{(Y,X)/K})$, 
$\FF \in F\text{-}D ^\mathrm{b} _\mathrm{surcoh} (\D ^\dag _{(Y,X)/K}) ^*$
et $\epsilon \colon 
\DD _{(Y,X)/K} (\E) \riso \FF$ 
est un isomorphisme de $F\text{-}D ^\mathrm{b} _\mathrm{surcoh} (\D ^\dag _{(Y,X)/K}) ^*$.
Un morphisme
$(\E', \FF', \epsilon ' )\to (\E, \FF, \epsilon )$ est la donnée 
des morphismes $f \colon \E' \to \E$ de 
$F\text{-}D ^\mathrm{b} _\mathrm{surcoh} (\D ^\dag _{(Y,X)/K}) $
et $g \colon \FF \to \FF'$ de $F\text{-}D ^\mathrm{b} _\mathrm{surcoh} (\D ^\dag _{(Y,X)/K}) ^*$
tels que $g \circ \epsilon  = \epsilon ' \circ \DD _{(Y,X)/K} (f)$.
\end{defi}

\begin{vide}
Soit $(Y,X)/K$ un couple.
Les foncteurs 
$F\text{-}D ^\mathrm{b} _\mathrm{surcoh} (\D ^\dag _{(Y,X)/K})
\to 
F\text{-}D ^\mathrm{b} _\mathrm{surhol} (\D ^\dag _{(Y,X)/K})$
et
$F\text{-}D ^\mathrm{b} _\mathrm{surhol} (\D ^\dag _{(Y,X)/K}) 
\to 
F\text{-}D ^\mathrm{b} _\mathrm{surcoh} (\D ^\dag _{(Y,X)/K})$
définis respectivement par 
$\E \mapsto 
(\E, \DD _{(Y,X)/K} (\E)  , id)$
et $(\E, \FF, \epsilon ) \mapsto \E$ sont des équivalences quasi-inverses de catégories. 
Cependant, contrairement a priori à $F\text{-}D ^\mathrm{b} _\mathrm{surcoh} (\D ^\dag _{(Y,X)/K})$,
on peut définir canoniquement le foncteur dual 
$$\DD _{(Y,X)/K}\colon 
F\text{-}D ^\mathrm{b} _\mathrm{surhol} (\D ^\dag _{(Y,X)/K})
\to 
F\text{-}D ^\mathrm{b} _\mathrm{surhol} (\D ^\dag _{(Y,X)/K})$$ 
en posant 
$\DD _{(Y,X)/K}(\E, \FF, \epsilon ) := (\DD _{(Y,X)/K} (\FF) , \DD _{(Y,X)/K} (\E) , \epsilon ')$, 
où  
$\epsilon '$ est l'isomorphisme composé: 
$$\epsilon ' \colon \DD _{(Y,X)/K} \circ \DD _{(Y,X)/K} (\FF)
\riso 
\FF 
\underset{\epsilon ^{-1}}{\riso}
 \DD _{(Y,X)/K} (\E) .$$

\end{vide}

\begin{vide}
Soit $(Y,X)/K$ un couple.
On définit le bifoncteur produit tensoriel
 $$-
\smash{\overset{\L}{\otimes}}   ^{\dag}
_{\O  _{(Y, X)/K}}
-
\colon
F\text{-}D ^\mathrm{b} _\mathrm{surhol} (\D ^\dag _{(Y,X)/K})
\times 
F\text{-}D ^\mathrm{b} _\mathrm{surhol} (\D ^\dag _{(Y,X)/K})
\to 
F\text{-}D ^\mathrm{b} _\mathrm{surhol} (\D ^\dag _{(Y,X)/K})$$ 
en posant, pour tous objets  
$(\E, \FF, \epsilon )$, 
$(\E', \FF', \epsilon ')$ de
$F\text{-}D ^\mathrm{b} _\mathrm{surhol} (\D ^\dag _{(Y,X)/K})$:
$$(\E, \FF, \epsilon )
\smash{\overset{\L}{\otimes}}   ^{\dag}
_{\O  _{(Y, X)/K}}
(\E', \FF', \epsilon '):=
(\E
\smash{\overset{\L}{\otimes}}   ^{\dag}
_{\O  _{(Y, X)/K}}
\E', 
\FF
\smash{\overset{\L}{\widetilde{\otimes}}}   ^{\dag}
_{\O  _{(Y, X)/K}}
\FF', \epsilon ''),$$
où $\epsilon''$ est canoniquement isomorphe à 
$\DD _{(Y,X)/K}( \DD _{(Y,X)/K} (\epsilon )
\smash{\overset{\L}{\otimes}}   ^{\dag}
_{\O  _{(Y, X)/K}}
\DD _{(Y,X)/K} (\epsilon ')) $.

\end{vide}

\begin{vide}
Soit $u \colon (Y', X' ) \to (Y,X)$ un morphisme de $\mathfrak{Cpl}$.
\begin{itemize}
\item On dispose du foncteur image inverse extraordinaire par $u$
$$u ^{!}\colon F\text{-}D ^\mathrm{b} _\mathrm{surhol} (\D ^\dag _{(Y,X)/K})
\to 
F\text{-}D ^\mathrm{b} _\mathrm{surhol} (\D ^\dag _{(Y',X')/K})$$
défini en posant, pour tout 
$(\E, \FF, \epsilon )$ de
$F\text{-}D ^\mathrm{b} _\mathrm{surhol} (\D ^\dag _{(Y,X)/K})$,
$$u ^{!} (\E, \FF, \epsilon )
:=
(u ^{!} (\E), u ^{+} (\FF), \epsilon '),$$ 
où 
$\epsilon '$ est l'isomorphisme canoniquement isomorphe à 
$u ^{+} (\epsilon)$.

\item Le foncteur image inverse par $u$
$u ^{+}\colon F\text{-}D ^\mathrm{b} _\mathrm{surhol} (\D ^\dag _{(Y,X)/K})
\to 
F\text{-}D ^\mathrm{b} _\mathrm{surhol} (\D ^\dag _{(Y',X')/K})$
est défini en posant 
$u ^{+}:= \DD _{(Y',X')/K} \circ u ^{!}\circ \DD _{(Y,X)/K}$.

\end{itemize}
\end{vide}

\begin{vide}
Soit $u \colon (Y', X' ) \to (Y,X)$ un morphisme réalisable de $ \mathfrak{Cpl}$.

\begin{itemize}
\item Le foncteur image directe par $u$
$$u _{+}\colon F\text{-}D ^\mathrm{b} _\mathrm{surhol} (\D ^\dag _{(Y',X')/K})
\to 
F\text{-}D ^\mathrm{b} _\mathrm{surhol} (\D ^\dag _{(Y,X)/K})$$
est défini en posant, 
pour tout objet
$(\E', \FF', \epsilon ')$ de
$F\text{-}D ^\mathrm{b} _\mathrm{surhol} (\D ^\dag _{(Y,X)/K})$,
$$u _{+}(\E', \FF', \epsilon ')
:=
(u _{+}(\E'), u _{!}( \FF'), \epsilon ),$$ 
où 
$\epsilon $ est l'isomorphisme canoniquement isomorphe à 
$u _{!} (\epsilon ')$.

\item Le foncteur image directe extraordinaire par $u$
$$u _{!}\colon F\text{-}D ^\mathrm{b} _\mathrm{surhol} (\D ^\dag _{(Y',X')/K})
\to 
F\text{-}D ^\mathrm{b} _\mathrm{surhol} (\D ^\dag _{(Y,X)/K})$$
est défini en posant 
$u _{!}:= \DD _{(Y,X)/K} \circ u _{+}\circ \DD _{(Y',X')/K}$.
\end{itemize}

\end{vide}

\section{Le formalisme des six opérations sur les catégories de complexes de type surholonome sur les variétés}

\begin{defi}
\label{uni-var}
On note $\mathfrak{Var}$ la catégorie des $k$-variétés.
Soit $Y$ un objet de $\mathfrak{Var}$.
On définit $\mathfrak{Uni} (Y/K)$, la catégorie des cadres propres  au-dessus de $Y/K$, 
de manière analogue à 
$\mathfrak{Uni} (Y,X/K) $  en remplaçant 
la catégorie $ \mathfrak{Cpl}$ par 
$\mathfrak{Var}$
et $\mathfrak{Cad} $ 
par 
$\mathfrak{Cadp} $. 
Un objet de $\mathfrak{Uni} (Y/K)$ est ainsi la donnée d'un cadre propre  $(Y',  \PP')$  et d'un morphisme de 
$k$-variétés 
de la forme $b \colon Y' \to Y$. 
On notera $b\colon (Y', \PP') \to Y$ un tel objet ou plus simplement par abus de notations $(Y', \PP') $.
Les morphismes 
$((Y'', \PP''), b')
\to
((Y',  \PP'), b)$
de $\mathfrak{Uni} (Y/K)$ sont les morphismes de cadres propres
$(c, f)\colon (Y'', \PP'')
\to
(Y', \PP')$
tels que 
$b \circ c =b'$.
On pourra noter abusivement 
$u\colon (Y'', \PP'')
\to
(Y', \PP')$ un tel morphisme. 
\end{defi}

\begin{defi}
\label{defi-Uni-var}
Soit $Y$ une variété sur $k$.
En remplaçant respectivement dans la définition
\ref{defi-surcoh-cadre}
les catégories $ \mathfrak{Cpl}$ par 
$\mathfrak{Var}$
et $\mathfrak{Uni} (Y,X/K) $ 
par 
$\mathfrak{Uni} (Y/K) $,
on définit la catégorie $F\text{-}D ^\mathrm{b} _\mathrm{surcoh} (\D ^\dag _{Y/K})$ 
des complexes de type surcohérent sur $Y/K$.

\end{defi}

\begin{vide}
Soit  $(Y, \PP)$ un cadre propre.
De manière analogue à \ref{eqcat-YXvsP},
on vérifie que le foncteur canonique de restriction 
$F\text{-}D ^\mathrm{b} _\mathrm{surcoh} (\D ^\dag _{Y/K})
\to 
F\text{-}D ^\mathrm{b} _\mathrm{surcoh}  (Y, \PP/K)$
est une équivalence de catégories.
\end{vide}

\begin{defi}
 \label{uni-sharp-var}

\begin{itemize}
\item 
On note $V _c \subset \mathrm{Fl}(\mathfrak{Cadp}) $ 
le système multiplicatif à gauche des flèches de la forme $(id,g) \colon (Y, \PP') \to (Y, \PP)$ 
(pour valider l'axiome S4' de \cite[7.1.5]{Kashiwara-schapira-book}, on utilise 
\cite[5.7.14]{Gruson_Raynaud-platprojectif} et \ref{theo-iminvextsurcoh}).
On dispose alors de 
la catégorie $ V _c ^{-1}\mathfrak{Cadp}$.  
Le foncteur canonique de restriction
$\mathfrak{Cadp} \to \mathfrak{Var} $ défini par 
$ (Y,\PP)\mapsto Y$ et $(b ,g)\mapsto b$ 
se factorise en le foncteur 
$\mathrm{s}\colon 
V _c ^{-1}\mathfrak{Cadp} \to \mathfrak{Var} $.
 
\item Soit $Y$ une variété. 
On note $W _c ^{-1}\mathfrak{Uni} (Y/K) $ 
la catégorie dont les objets sont ceux de $\mathfrak{Uni} (Y/K) $
et 
dont les morphismes de la forme $(Y'',  \PP''), ~u') \to (Y', \PP'), ~u)$ 
sont les morphismes  
$\phi \colon (Y'',  \PP'')\to (Y', \PP')$ 
de $ V _c ^{-1}\mathfrak{Cadp} $ au-dessus de $Y$,
i.e. tel que $u \circ \mathrm{s} (\phi) = u '$. 

\item 
Comme pour \ref{defi-surcoh-cadre-indX}, 
en remplaçant respectivement dans la définition
\ref{defi-Uni-var}
la catégorie 
et $\mathfrak{Uni} (Y/K) $ 
par 
$W _c ^{-1}\mathfrak{Uni} (Y/K) $,
grâce aussi à \ref{im-inv-S-1},
on définit la catégorie $F\text{-}\widetilde{D} ^\mathrm{b} _\mathrm{surcoh} (\D ^\dag _{Y/K})$ 
des complexes de type surcohérent sur $(Y,X)/K$.
De même que pour 
\ref{prop-surcoh-indX},
on vérifie que le foncteur de restriction
$F\text{-}\widetilde{D} ^\mathrm{b} _\mathrm{surcoh} (\D ^\dag _{Y/K})
\to 
F\text{-}D ^\mathrm{b} _\mathrm{surcoh} (\D ^\dag _{Y/K})$
est une équivalence de catégories.

\end{itemize}
\end{defi}

\begin{vide}
\begin{itemize}
\item Soit $b \colon Y' \to Y$ un morphisme de $k$-variétés. 
Comme $\mathfrak{Uni} (Y'/K)$ est une sous-catégorie 
de
$\mathfrak{Uni} (Y/K)$, on dispose donc du foncteur restriction
$F\text{-}D ^\mathrm{b} _\mathrm{surcoh} (\D ^\dag _{Y/K})
\to 
F\text{-}D ^\mathrm{b} _\mathrm{surcoh} (\D ^\dag _{Y'/K})$
que l'on notera $b ^{!}$ et que l'on appellera aussi image inverse extraordinaire par 
$b$.

\item De manière analogue à \ref{prod-tens-surcoh}, on définit le bifoncteur produit tensoriel
que l'on notera
$$-
\smash{\overset{\L}{\otimes}}   ^{\dag}
_{\O  _{Y/K}}
-
\colon
F\text{-}D ^\mathrm{b} _\mathrm{surcoh} (\D ^\dag _{Y/K})
\times 
F\text{-}D ^\mathrm{b} _\mathrm{surcoh} (\D ^\dag _{Y/K})
\to 
F\text{-}D ^\mathrm{b} _\mathrm{surcoh} (\D ^\dag _{Y/K}).$$ 
\end{itemize}

\end{vide}

\begin{defi}
\label{defi-abs-real}
Soit $b \colon \widetilde{Y} \to Y$ un morphisme de $k$-variétés. 
On dira que $b$ est réalisable si 
pour tout objet $(Y',\PP')$ de $\mathfrak{Uni} (Y/K)$,
il existe un objet de 
$\mathfrak{Cadp} $
de la forme 
$(\widetilde{Y} \times _Y Y', \widetilde{\PP} ')$.
Remarquons que, 
quitte à considérer 
$ \widetilde{\PP} ' \times \PP$ 
à la place de 
$ \widetilde{\PP} '$, on peut supposer qu'il existe
un morphisme de la forme 
$(b',f')\colon 
(\widetilde{Y} \times _Y Y', \widetilde{\PP} ')
\to 
(Y',\PP')$
avec $f'$ lisse et où $b'$ est la projection canonique. 
Si  $b \colon \widetilde{Y} \to Y$ un morphisme réalisable de $k$-variétés, 
alors, de manière analogue à \ref{defi-ba+}, 
on définit le foncteur image directe que l'on notera 
$$ b _{+}\colon 
F\text{-}D ^\mathrm{b} _\mathrm{surcoh} (\D ^\dag _{\widetilde{Y}/K})
\to F\text{-}D ^\mathrm{b} _\mathrm{surcoh} (\D ^\dag _{Y/K}).$$
\end{defi}

\begin{rema}
Soit $u=(b,a)\colon (Y',X')\to (Y,X)$ un morphisme de couples 
avec $X$ et $X'$ propres. Alors $u$ est réalisable si et seulement si $b$ est réalisable. 
En effet, cela résulte facilement de 
\ref{theo-iminvextsurcoh}.1 et \cite[5.7.14]{Gruson_Raynaud-platprojectif}.
\end{rema}

\begin{defi}
Soit $Y$ une variété sur $k$.
Comme pour \ref{def-surcoh-couple*}, en remplaçant les images inverses extraordinaires par 
les images inverses, on définit la catégorie
$F\text{-}D ^\mathrm{b} _\mathrm{surcoh} (\D ^\dag _{Y/K}) ^{*}$
des complexes de type dual surcohérent sur $Y$.
Le foncteur dual induit l'équivalence canonique de catégories 
$\DD _{Y/K} \colon 
F\text{-}D ^\mathrm{b} _\mathrm{surcoh} (\D ^\dag _{Y/K})
\cong
F\text{-}D ^\mathrm{b} _\mathrm{surcoh} (\D ^\dag _{Y/K}) ^{*}$.
On définit alors, comme pour \ref{defi-surhol-couple}, 
la catégorie $F\text{-}D ^\mathrm{b} _\mathrm{surhol} (\D ^\dag _{Y/K})$
des complexes de type surholonome sur $Y$. 
\end{defi}

\begin{vide}
Comme pour le chapitre précédent, 
on définit les six opérations cohomogiques sur les catégories de complexes de type surholonomes sur les $k$-variétés.
Pour toute $k$-variété $Y$, 
on note 
$\DD _{Y/K} \colon 
F\text{-}D ^\mathrm{b} _\mathrm{surhol} (\D ^\dag _{Y/K})
\to 
F\text{-}D ^\mathrm{b} _\mathrm{surhol} (\D ^\dag _{Y/K})$
le foncteur dual
et
 $-
\smash{\overset{\L}{\otimes}}   ^{\dag}
_{\O  _{Y/K}}
-$
le bifoncteur produit tensoriel.
Pour tout morphisme $b \colon \widetilde{Y}\to Y$ de $k$-variétés, 
on note $b ^{!}$ l'image inverse extraordinaire et 
$b ^{+}$ l'image inverse. 
Pour tout morphisme réalisable $b \colon \widetilde{Y}\to Y$ de $k$-variétés,
on note $b _{!}$ l'image directe extraordinaire et 
$b _{+}$ l'image directe.

\end{vide}

\begin{vide}
\label{defi-surhol-Y}
$\bullet$ Soient $(Y',X')/K$ et $(Y,X)/K$
deux couples tels que $X$ et $X'$ soient propres.
L'application canonique de restriction
$\mathrm{Hom} _{S _c  ^{-1} \mathfrak{Cpl}} ( (Y', X' ) ,~(Y, X) )
\to
\mathrm{Hom} _{\mathfrak{Var}} ( Y' ,~Y )$
est alors une bijection.
En effet,
soit $b\colon Y' \to Y$ un morphisme. 
Alors, $Y'$ est une sous-variété de
$X ' \times X$ via le graphe de $b$. 
En notant $Z'$ l'adhérence de $Y'$ dans
$X ' \times X$, on obtient alors le morphisme de $S _c  ^{-1} \mathfrak{Cpl}$ représenté par
$ (Y', X' ) \underset{(id, c)}{\longleftarrow} (Y',Z' ) \underset{(b,a)}{\longrightarrow} (Y,X)$.
D'où la surjectivité. 
Soient deux morphismes de $S _c  ^{-1} \mathfrak{Cpl}$ représentés respectivement par
$ (Y', X' ) \underset{(id, c)}{\longleftarrow} (Y',Z' ) \underset{(b,a)}{\longrightarrow} (Y,X)$
et
$ (Y', X' ) \underset{(id, c')}{\longleftarrow} (Y',Z'' ) \underset{(b,a')}{\longrightarrow} (Y,X)$.
Posons $T := Z ' \times _{X'} Z''$, 
$p \colon T \to Z'$ et $q \colon T \to Z''$ les projections canoniques,
$f := a \circ p$ et $g := a ' \circ q$ et 
$T' := \ker (f,g)$ et $i\colon T ' \to T$ l'immersion fermée canonique (on rappelle que 
$\ker (f,g)= T \times _{T \times X} T$, les deux morphismes $T \to T \times X$ étant donnés respectivement par le graphe de $f$ et de $g$). 
On vérifie alors 
que $ (Y', X' ) \underset{(id, c\circ p\circ i)}{\longleftarrow} (Y',T' ) \underset{(b,a\circ p\circ i)}{\longrightarrow} (Y,X)$
est équivalent à ces deux morphismes qui sont par conséquent égaux.

$\bullet$ Soient $(Y', X', \PP', \QQ' )$ et $ (Y,X, \PP, \QQ)$  deux objets de
$ T _c ^{-1}\mathfrak{Cad}$ tels que $X'$ et $X$ soient propres.  
On a de même 
$\mathrm{Hom} _{T _c ^{-1}\mathfrak{Cad}} ( (Y', X', \PP', \QQ' ),~(Y,X, \PP, \QQ))
= 
\mathrm{Hom} _{V ^{-1} _c\mathfrak{Cadp}} 
( (Y', \PP'),~(Y,\PP))
=
\mathrm{Hom} _{\mathfrak{Var}} ( Y' ,~Y )$.

$\bullet$ On déduit des deux premiers points que
les catégories  
$F\text{-}\widetilde{D} ^\mathrm{b} _\mathrm{surcoh} (\D ^\dag _{Y/K})$ 
et 
$F\text{-}\widetilde{D} ^\mathrm{b} _\mathrm{surcoh} (\D ^\dag _{(Y,X)/K})$ sont canoniquement égales.
Comme les foncteurs canoniques de restriction
$F\text{-}\widetilde{D} ^\mathrm{b} _\mathrm{surcoh} (\D ^\dag _{(Y,X)/K})
\to 
F\text{-}D ^\mathrm{b} _\mathrm{surcoh} (\D ^\dag _{(Y,X)/K})$
et
$F\text{-}\widetilde{D} ^\mathrm{b} _\mathrm{surcoh} (\D ^\dag _{Y/K})
\to 
F\text{-}D ^\mathrm{b} _\mathrm{surcoh} (\D ^\dag _{Y/K})$
sont des équivalences de catégories, 
on en déduit qu'il en est de même du foncteur restriction
$$F\text{-}D ^\mathrm{b} _\mathrm{surcoh} (\D ^\dag _{Y/K})
\to 
F\text{-}D ^\mathrm{b} _\mathrm{surcoh} (\D ^\dag _{(Y,X)/K}).$$
De même, le foncteur restriction
$F\text{-}D ^\mathrm{b} _\mathrm{surhol} (\D ^\dag _{Y/K})
\to 
F\text{-}D ^\mathrm{b} _\mathrm{surhol} (\D ^\dag _{(Y,X)/K})$
est une équivalence de catégories.
\end{vide}

\bibliographystyle{smfalpha}
\providecommand{\bysame}{\leavevmode ---\ }
\providecommand{\og}{``}
\providecommand{\fg}{''}
\providecommand{\smfandname}{et}
\providecommand{\smfedsname}{\'eds.}
\providecommand{\smfedname}{\'ed.}
\providecommand{\smfmastersthesisname}{M\'emoire}
\providecommand{\smfphdthesisname}{Th\`ese}

\bigskip
\noindent Daniel Caro\\
Laboratoire de Mathématiques Nicolas Oresme\\
Université de Caen
Campus 2\\
14032 Caen Cedex\\
France.\\
email: daniel.caro@unicaen.fr

\end{document}